\newtheorem{theorem}{Theorem}[section]
\newtheorem{proposition}[theorem]{Proposition}
\newtheorem{lemma}[theorem]{Lemma}
\newtheorem{question}[theorem]{Open Question}
\newtheorem{assumption}[theorem]{Assumption}
\theoremstyle{definition}
\newtheorem{remark}[theorem]{Remark}
\newcommand{\R}{\mathbb{R}}
\newcommand{\disk}{{{\mathbb{B}}}}
\newcommand{\bt}{{\boldsymbol{t}}}
\newcommand{\bx}{{\boldsymbol{x}}}
\newcommand{\origin}{{\boldsymbol{0}}}
\DeclareMathOperator{\di}{div}
\DeclareMathOperator{\grad}{grad}
\DeclareMathOperator{\pl}{Pl}
\DeclareMathOperator{\reach}{reach}
\DeclareMathOperator{\dist}{dist}
\DeclareMathOperator{\supp}{supp}
\begin{document}
\title
{On Pleijel's nodal domain theorem for the Robin problem
}
\author{Asma Hassannezhad}
\address{University of Bristol,
School of Mathematics,
Fry Building,
Woodland Road,
Bristol, 
BS8 1UG, U.K.}
\email{asma.hassannezhad@bristol.ac.uk}
\author{David Sher}
\address{DePaul University, Department of Mathematical Sciences, 2320 N Kenmore Ave., Chicago, IL, 60614, U.S.A.}
\email{dsher@depaul.edu}
\date{}
\begin{abstract} We prove an improved Pleijel nodal domain theorem for the Robin eigenvalue problem. In particular we remove the restriction, imposed in previous work, that the Robin parameter be non-negative. We also improve the upper bound in the statement of the Pleijel theorem. In the particular example of a Euclidean ball, we calculate the explicit value of the Pleijel constant for a generic constant Robin parameter and we show that it is equal to the Pleijel constant for the Dirichlet Laplacian on a Euclidean ball. 
\end{abstract}
\maketitle
\section{Introduction}
Let $\Omega\subseteq\mathbb R^d$ be an  open, bounded, and connected domain with $C^{1,1}$ boundary, and let $h\in L^\infty(\Omega)$. We consider the eigenvalue problem for the Robin Laplacian
\begin{equation}\label{Robin problem}\begin{cases} \Delta u = \mu u & \textrm{ in }\Omega\\
 \partial_n u+h u =0 & \textrm{ on }\partial\Omega\\
\end{cases},
\end{equation}
where $\Delta=-\di\grad$
is the positive Laplacian and $n$ is the unit outward normal along $\partial\Omega$. Its spectrum consists of a discrete sequence of real numbers bounded from below, each with finite multiplicity. 
We denote its eigenvalues by $\{\mu_k\}_{k=1}^{\infty}$. Let $\{u_k\}_{k=1}^{\infty}$ be a corresponding sequence of eigenfunctions which form an orthonormal basis of $L^2(\Omega)$.  Finally, let $N_k$ be the number of nodal domains of $u_k$, which is the number of connected components of $\Omega\setminus \overline{u_k^{-1}(0)}$.  We use the notation $\Delta_{\Omega}^{R,h}$ to refer to the Robin Laplacian in \eqref{Robin problem}. By the Courant nodal domain theorem we know that $N_k\le k$ for any $k\ge 1$. 

In 1956,  Pleijel \cite{Pl56} studied the asymptotic behaviour of the number of the nodal domains of Dirichlet eigenfunctions in a planar domain. Let $\Omega\subseteq\R^2$ be open, bounded, connected, and Jordan measurable. Let $N^D_k$ denote the number of nodal domains of the eigenfunction of the Laplacian on $\Omega$ corresponding to the $k$th Dirichlet eigenvalue (with multiplicity). The Pleijel nodal domain theorem states that 
\begin{equation*}
    \limsup_{k\to\infty}\frac{N^D_k}{k}\le\gamma(2)=\frac{4}{j_0^2}.
\end{equation*}
This theorem was extended to the manifold setting in dimension two by Peetre \cite{peetre} and to any dimension by B\'erard and Meyer \cite{BM82}, with the constant $\gamma(2)$ replaced by $$
\gamma(d)=\frac{2^{d-2}d^2\Gamma(d/2)^2}{j^d_{(d-2)/2}},$$
where $j_{(d-2)/2}$ is  the smallest positive zero of the Bessel function $J_{(d-2)/2}$. Note that $\gamma(d)<1$ for $d\ge2$ \cite{BM82}. More recently Bourgain \cite{Bourgain}, Donnelly \cite{don}, and Steinerberger \cite{Stein} showed that $\gamma(d)$ is not an optimal upper bound. Their results give an improved version of the Pleijel theorem, i.e. 
\begin{equation}\label{eq:BDS}
    \limsup_{k\to\infty}\frac{N^D_k}{k}\le\gamma(d)-\epsilon(d),
\end{equation}
where $\epsilon(d)$ is a positive constant depending only on $d$. These results hold for the Dirichlet Laplacian as well as for the Laplacian on a closed manifold. 

The first result in extending the Pleijel theorem to other boundary conditions is due to I. Polterovich \cite{Pol09}, who extended the Pleijel theorem to the Neumann Laplacian  on  domains $\Omega\subset\R^2$ with  analytic boundary. L\'ena \cite{lena} extended the Pleijel theorem to the Robin problem with non-negative parameter (which includes the Neumann problem) on a bounded and $C^{1,1}$ domain~$\Omega$ in any dimension, using a different approach.

The goal of this paper is twofold. First, we prove an improved Pleijel theorem for the Robin problem \eqref{Robin problem} without any assumption on the sign of $h$.  Our result extends L\'ena's result in two ways. It removes the assumption that $h$ is non-negative. In addition, it shows that the constant in the Pleijel nodal domain theorem for the Robin problem can be improved.

Second, we calculate the exact Pleijel constant for the Robin problem on the Euclidean unit ball, with $h$ being a constant (of either sign), and show that for generic $h$ it is the same as the Pleijel constant for the Dirichlet problem on the same Euclidean unit ball. 
 
 Our first main result is the following. 
\begin{theorem}\label{Rpleijel}
There exists a positive constant  $\epsilon(d)$ depending only on $d$ such that  for any open, bounded, connected domain $\Omega\subset \R^d$ with $C^{1,1}$ boundary we have 
\begin{equation}
    \limsup_{k\to\infty}\frac{N_k}{k}\le\gamma(d)-\epsilon(d),
\end{equation}
where  $\gamma(d)$ is as in \eqref{eq:BDS}.
\end{theorem}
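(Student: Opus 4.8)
The plan is to transplant the Robin problem to a Dirichlet problem on each nodal domain and then apply the packing-type refinement of the Faber--Krahn inequality due to Bourgain, Donnelly, and Steinerberger. Write $r_k=\mu_k^{-1/2}$ for the natural length scale, and recall Weyl's law $\mu_k\sim(2\pi)^2\omega_d^{-2/d}(k/|\Omega|)^{2/d}$ for the Robin problem, whose leading term is independent of $h$ because $h\in L^\infty$ affects only lower-order terms; here $\omega_d=|\disk|$. On each nodal domain $\Omega_i$ the restriction $u_k|_{\Omega_i}$ is sign-definite, hence it is the positive ground state of the mixed problem with Dirichlet data on the nodal set $\partial\Omega_i\cap\Omega$ and the Robin condition $\partial_n u+hu=0$ on the boundary piece $\Gamma_i:=\partial\Omega_i\cap\po$, with eigenvalue $\mu_k$; in particular $\mu_k=\lambda_1^{\mathrm{mix}}(\Omega_i)$. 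First I would separate the \emph{interior} domains ($\Gamma_i=\emptyset$) from those meeting $\po$.

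To control the boundary domains I would fix a small threshold $\beta>0$ and call a boundary domain \emph{substantial} if $\mathcal{H}^{d-1}(\Gamma_i)\ge\beta r_k^{d-1}$ and \emph{negligible} otherwise. Since the $\Gamma_i$ are pairwise disjoint subsets of $\po$, summing their measures gives
\[
\#\{\text{substantial domains}\}\le\frac{\mathcal{H}^{d-1}(\po)}{\beta\,r_k^{d-1}}=O\!\left(\mu_k^{(d-1)/2}\right)=O\!\left(k^{(d-1)/d}\right)=o(k),
\]
so substantial domains never affect $\limsup_k N_k/k$, and this is true \emph{for either sign of} $h$. The benefit of classifying by boundary-contact measure rather than by location is that it does not require confining $\Omega_i$ to a boundary layer, which may fail since $\Omega_i$ can be long and thin.

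The crux is that interior and negligible domains together obey an essentially sharp volume bound. Rescaling $\Omega_i$ by $r_k^{-1}$ turns the eigenvalue into $1$ and the Robin parameter into $h\,r_k\to0$, so at the natural scale the boundary condition is an arbitrarily small perturbation of Neumann --- this is precisely what eliminates any dependence on the sign of $h$. Using a cutoff/capacity estimate to force the mixed ground state to vanish on the small patch $\Gamma_i$, I would prove
\[
\mu_k=\lambda_1^{\mathrm{mix}}(\Omega_i)\ge(1-\eta(\beta))\,\lambda_1^{D}(\Omega_i)\ge(1-\eta(\beta))\,j_{(d-2)/2}^2\left(\frac{\omega_d}{|\Omega_i|}\right)^{2/d},
\]
with $\eta(\beta)\to0$ as $\beta\to0$, the second inequality being the classical Dirichlet Faber--Krahn inequality; equivalently $|\Omega_i|\ge(1-\eta(\beta))^{d/2}\omega_d\,j_{(d-2)/2}^{d}\,\mu_k^{-d/2}$ (with $\eta=0$ for interior domains). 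The hard part will be this capacity estimate: it must be uniform in the size and shape of $\Omega_i$, local to $\Gamma_i$, and use only the $C^{1,1}$ flatness of $\po$ at scale $r_k$, so that a tiny Robin patch of either sign perturbs the ground-state energy by a controlled amount.

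Finally I would apply the Bourgain--Donnelly--Steinerberger improvement to the family of interior and negligible domains: each is a near-ball of volume $\gtrsim\mu_k^{-d/2}$, and such near-balls cannot pack $\Omega$ without a definite volume deficit, yielding
\[
\#\{\text{interior and negligible domains}\}\le(1-\eta(\beta))^{-d/2}\big(\gamma(d)-\epsilon_0(d)\big)\,k\,(1+o(1))
\]
for a fixed $\epsilon_0(d)>0$. Adding the $o(k)$ substantial domains and taking the limsup in $k$ for each fixed $\beta$ gives $\limsup_k N_k/k\le(1-\eta(\beta))^{-d/2}(\gamma(d)-\epsilon_0(d))$; letting $\beta\to0$ produces $\limsup_k N_k/k\le\gamma(d)-\epsilon(d)$ with $\epsilon(d)=\epsilon_0(d)$, as claimed.
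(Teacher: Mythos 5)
Your overall architecture (separate ``good'' nodal domains from an $o(k)$ exceptional family, get a Faber--Krahn-type volume lower bound on the good ones, then run the Bourgain--Donnelly--Steinerberger packing refinement) matches the paper in spirit, but the step you yourself flag as ``the hard part'' is not merely hard --- it is false as stated, and this is exactly the difficulty the paper's argument is designed to avoid. You classify boundary domains by the surface measure $\mathcal{H}^{d-1}(\Gamma_i)$ and claim $\lambda_1^{\mathrm{mix}}(\Omega_i)\geq(1-\eta(\beta))\lambda_1^{D}(\Omega_i)$ with $\eta(\beta)\to0$ uniformly over all shapes. After rescaling, the relevant question is whether a Neumann window of small $(d-1)$-measure on $\partial\Omega_i$ perturbs the Dirichlet ground-state energy by $o(1)$. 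The obstruction is that the effect of a Neumann window is governed by its capacity (more precisely by a capacity density), not by its measure: a nodal domain may meet $\partial\Omega$ in many small, spread-out components whose total measure is below $\beta r_k^{d-1}$ but whose capacity is comparable to that of a full face, in which case, by the standard homogenization picture, the mixed eigenvalue is close to that of a problem with a macroscopic Neumann face and can be a definite factor below $\lambda_1^{D}(\Omega_i)$ (e.g.\ $3/4$ for a cube with one Neumann face). Since nothing bounds the number of components of $\Gamma_i$ or its spread, no $\eta$ depending only on $\beta$ and the $C^{1,1}$ character of $\partial\Omega$ can work, and a loss by a fixed factor in the volume bound destroys the Pleijel constant entirely.

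There is a second, related gap: to run the Donnelly trichotomy (large volume / large Fraenkel asymmetry / near-ball) you need not only the lower bound $|\Omega_i|\gtrsim\mu_k^{-d/2}$ but also a matching \emph{upper} bound $\lambda_1^{D}(\Omega_i)\leq(1+o(1))\mu_k$, so that near-equality in the quantitative Faber--Krahn inequality forces near-ball shape; your scheme supplies this only for interior domains. The paper obtains both ingredients simultaneously by classifying nodal domains according to where the $L^2$-mass of the \emph{eigenfunction} sits (the $\epsilon$-bulk/$\epsilon$-boundary dichotomy at scale $\delta=\mu_k^{-1/4}$), using the cutoff $u_0=\phi_0^{\delta}u$ as a genuine Dirichlet test function on each bulk domain to get $\lambda_1^{D}(D_j)\leq\frac{1+\epsilon}{1-\epsilon}\mu_k+O(\sqrt{\mu_k})$ (inequality \eqref{inq:lena}), controlling the gradient term for sign-changing $h$ via the Green's-formula/trace estimate of Proposition \ref{prop:green}, and counting the boundary domains by a reflection-plus-Faber--Krahn argument rather than by a surface-measure pigeonhole. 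Your rescaling heuristic $h\,r_k\to0$ correctly identifies why the sign of $h$ should be irrelevant, but turning it into an inequality is precisely the content of Proposition \ref{prop:green}, which your proposal does not replace.
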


\begin{remark}The statement of Theorem \ref{Rpleijel} holds if $\Omega$ is an open, bounded, connected domain with $C^{1,1}$ boundary in a complete Riemannian manifold, see Theorem~\ref{Rpleijel mflds} below. \end{remark}

The proof of Theorem \ref{Rpleijel} closely follows the proof of the Pleijel nodal domain theorem for the Robin problem with positive parameter by  L\'ena \cite{lena}, incorporating the proof of the improved Pleijel theorem by Bourgain \cite{Bourgain}, Donnelly  \cite{don} and Steinerberger \cite{Stein}. In particular, the methods of the latter three authors are applied to a subset of nodal domains which L\'ena calls the \textit{bulk domains}. These are the nodal domains for which the  $L^2$-norm of the corresponding eigenfunction is concentrated away from the boundary.

The proof uses $C^{1,1}$ regularity  of the boundary, and it is open whether the Pleijel theorem holds under weaker regularity assumptions. For example, the square is not a $C^{1,1}$ domain, and the Pleijel theorem for the Robin Laplacian on the square remains open.  The only available result in this setting is the study of bounds on the number of Courant-sharp Robin eigenvalues.  A Robin eigenvalue is called \textit{Courant-sharp} if it  has a corresponding eigenfunction with exactly $k$ nodal domains; note that an immediate consequence of the Pleijel theorem for $C^{1,1}$ domains is that the number of Courant-sharp Robin eigenvlaues is finite. Gittins and Helffer  \cite{GH,GH21} studied upper bounds on the number of Courant-sharp eigenvalues of the Robin problem on a square when the Robin parameter $h$ is constant.  In particular, they show that the Robin Laplacian on a square with constant parameter $h$ has finitely many Courant-sharp eigenvalues.

We also note that a bound on the number of Courant-sharp eigenvalues of the Robin problem with non-negative parameter on domains with $C^2$ boundary  was obtained by Gittins and L\'ena \cite{GL20}. They made use of monotonicity results between Robin, Dirichlet and Neumann eigenvalues which only hold for non-negative Robin parameter.\\

The second part of this paper deals with calculating the Pleijel constant for the Robin eigenvalue problem $\Delta^{R,\sigma}_\Omega$ on a disk, where $\sigma\in \R$ is constant. We call the exact value  of $\displaystyle{\limsup_{k\to\infty}\frac{N_k}{k}}$ on a given domain $\Omega$ the \textit{Pleijel constant} and denote it by $\pl(\Delta_\Omega^{R,h})$. For the Dirichlet and Neumann Laplacian, we denote the Pleijel constant by  $\pl(\Delta^D_\Omega)$ and $\pl(\Delta^N_\Omega)$. I. Polterovich \cite{Pol09} has conjectured that
\[\sup_\Omega\pl(\Delta^D_\Omega)=\sup_\Omega\pl(\Delta^N_\Omega)= \frac{2}{\pi}=0.636\cdots,\]
where the supremum is taken over all planar domains with regular boundary (e.g. Lipschitz). 
If the conjecture is true, the inequality is sharp, as Polterovich has shown that the upper bound is attained for rectangles \cite{Pol09}.

The value of the Pleijel constant for a given domain is generally unknown. Bobkov \cite{Bobkov} calculated $\pl(\Delta_\Omega^{D})$ for some simple domains, including the Euclidean disk in $\R^2$.  When $\Omega=\disk$ is a Euclidean disk in $\R^2$, he showed that $\pl(\Delta_\disk^{D})=0.461\cdots$

We show that in the example of the unit disk, with constant Robin parameter $h=\sigma$, the value of the Pleijel constant is generically independent of the boundary condition. More precisely, we introduce the following condition on $\sigma$:
\begin{assumption}\label{assumption:bourget} The positive roots of the equation
\[\frac{zJ_{m+d/2-1}'(z)}{J_{m+d/2-1}(z)}=\frac d2+1-\sigma,\]
where $J_{\nu}(z)$ is the Bessel function, do not coincide for different values of $m\in\mathbb N_0$.
\end{assumption}
This condition should be thought of as a variant of Bourget's hypothesis \cite[p.~485]{watson}. It is generic in the sense that it may fail for at most a countable set of $\sigma\in\mathbb R$, as we show in Proposition \ref{prop:genericity}. Under this assumption we are able to prove that the Pleijel constant for the Robin problem is the same as for the Dirichlet problem.
\begin{theorem}\label{ball Pleijel constant}
Let $\disk$ be a Euclidean ball in $\R^d$. Assume that $h=\sigma$ and that Assumption \ref{assumption:bourget} holds for $\sigma$. Then $\pl(\Delta^{R,\sigma}_\disk)=\pl(\Delta^D_\disk)$. In particular for $d=2$ we have $\pl(\Delta^{R,\sigma}_\disk)=\pl(\Delta^D_\disk)=0.461\cdots$.
\end{theorem}

 The proof begins by using separation of variables to write down the Robin spectrum. Assumption \ref{assumption:bourget} shows that there is no accidental coincidence resulting in mixing of spherical harmonics from different eigenspaces, and thus each Robin eigenfunction is a spherical harmonic in the angular variables times a Bessel function in the radial variable. We show using Bessel function asymptotics that for each Robin eigenfunction, there is a corresponding Dirichlet eigenfunction with the same nodal count, and with an eigenvalue sufficiently close to the Robin eigenvalue. This is enough to prove the theorem.

It is an intriguing question to investigate the independence of the Pleijel constant from the boundary condition. More precisely

\begin{question}
Can we show that $\pl(\Delta^{R,h}_\Omega)=\pl(\Delta^D_\Omega)$ for any bounded connected domain $\Omega$ with regular enough boundary (e.g. Lipschitz or $C^{1,1}$ boundary)?
\end{question}

\section{Proof of Theorem \ref{Rpleijel}}

The proof of Theorem \ref{Rpleijel} consists of two parts: the first part relies on the proof of the Pleijel theorem for the Robin problem with non-negative parameter by  L\'ena \cite{lena}, and the second part on the proof of the improved Pleijel theorem by Bourgain \cite{Bourgain}, Donnelly  \cite{don} and Steinerberger \cite{Stein}. Let us first review the main ideas of the previous work.

The classical proof of the Pleijel theorem for the Dirichlet Laplacian uses only the Faber-Krahn inequality (for all nodal domains) and the Weyl law.  However, for the Neumann and Robin eigenvalue problems,  the Faber-Krahn inequality does not hold for the  nodal domains which are adjacent to the boundary. For bounded planar domains with analytic boundary,  I.~Polterovich obtained the Pleijel nodal domain theorem  using an estimate by Toth and Zelditch \cite{TZ09} that the growth rate of the number of nodal domains adjacent to the boundary is smaller than the index $k$.  L\'ena extended the Pleijel theorem to the Robin problem with non-negative parameter on a bounded domain $\Omega\subset\R^n$ with $C^{1,1}$ boundary using a completely different approach. 

Since our proof relies on L\'ena's proof, we briefly explain the main ideas. L\'ena considered two families of nodal domains, which he called the bulk and boundary nodal domains.
A nodal domain $D$ for which  the  $L^2$-norm of the eigenfunction on $D$ concentrates away from the boundary  of $\Omega$ is called a \textit{bulk nodal domain}, while one for which the $L^2$-norm of the eigenfunction on $D$ has significant mass near the boundary of $\Omega$ is referred to as a \textit{boundary nodal domain}. He then showed that the growth rate of the number of boundary nodal domains is lower than $k$. Thus the boundary nodal domains do not contribute to the limit of the fraction $N_k/k$ as $k\to\infty$. We use the same decomposition of nodal domains into bulk and boundary domains and extend L\'ena's result \emph{without} imposing any constraint on the Robin parameter $h$. Roughly speaking, we show that the contribution from $h^{-}=\max\{-h,0\}$ contributes to the lower order terms but disappears when divided by $k$ in the limit.

To get an improved version of the Pleijel theorem for the Laplacian, the main idea (as in \cite{Bourgain,don,Stein}) is to follow the original proof of the Pleijel theorem but use a quantitative version of the Faber-Krahn inequality and show that it leads to an improvement of the upper bound in the Pleijel theorem.  We adapt this idea for the Robin eigenvalue problem by applying  a quantitative version of the Faber-Krahn inequality for the bulk domains only. \\

We begin with the same general approach as outlined in  \cite{lena}. We first give  two key propositions concerning the eigenfunctions. The first involves boundary regularity, generalizing \cite[Proposition 1.6]{lena}. Throughout this section, we will mainly use the same notations as in  \cite{lena}.

\begin{proposition}\label{prop:regularity}
Any eigenfunction $u$ of $\Delta_{\Omega}^{R,h}$ is an element of $C^1(\overline{\Omega})$.
\end{proposition}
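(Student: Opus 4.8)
The plan is a standard elliptic-regularity bootstrap in which essentially all of the work takes place at the boundary. Away from $\partial\Omega$ the eigenvalue equation $\Delta u=\mu u$ is a second-order elliptic equation with smooth coefficients, so applying interior elliptic regularity to the weak solution $u\in H^1(\Omega)$ gives $u\in C^\infty(\Omega)$ by the usual $H^1\to H^3\to\cdots$ iteration. This half needs no boundary hypothesis and can be dispatched in a sentence; the substance of the proposition is the $C^1$ behaviour up to $\partial\Omega$.

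For the boundary, I would first observe that $u$ is a weak solution of a divergence-form problem with a Robin (oblique) boundary condition and bounded coefficient $h$, so De Giorgi--Nash--Moser theory for such mixed problems on Lipschitz --- hence a fortiori $C^{1,1}$ --- domains yields $u\in L^\infty(\Omega)\cap C^{0,\alpha}(\overline{\Omega})$ for some $\alpha>0$. The key device, exactly as in \cite[Proposition 1.6]{lena}, is then to reinterpret the homogeneous Robin condition $\partial_n u+hu=0$ as an \emph{inhomogeneous Neumann} condition $\partial_n u=g$ with $g:=-h\,u|_{\partial\Omega}$, and to feed this boundary datum, together with the interior datum $f:=\mu u$, into boundary elliptic estimates for the Neumann problem on a $C^{1,1}$ domain. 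Flattening $\partial\Omega$ in $C^{1,1}$ charts, one runs a short bootstrap: the trace of $u$ lies a priori in $H^{1/2}(\partial\Omega)$, the $W^{2,p}$/Schauder estimate for the Neumann problem upgrades $u$ by (almost) one derivative, and iterating finitely many times should land $u$ in $W^{2,p}(\Omega)$ for some $p>d$, whence the Sobolev embedding $W^{2,p}\hookrightarrow C^{1,\alpha}\subset C^1(\overline{\Omega})$ closes the argument.

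The main obstacle is controlling the regularity of the reformulated boundary datum $g=-hu$ and converting it into the $C^1$ conclusion, since this is precisely where the structure of the Robin problem (rather than any sign condition on $h$) is used. The gain at each bootstrap step is limited by the smoothness of $g$: the Neumann estimate can demand no more regularity of the normal derivative than the product $h\cdot u|_{\partial\Omega}$ actually possesses, so the iteration must be arranged so that the regularity required of $g$ never outruns what is available from $h$ and the current regularity of the trace of $u$. I expect the borderline nature of the $C^{1,1}$ hypothesis (as opposed to $C^{2,\alpha}$, which would give $C^{1,\alpha}$ boundary estimates immediately) to require some care in quoting the correct version of the boundary estimate, but no difficulty beyond the boundary-datum bookkeeping just described. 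This careful treatment of the $hu$ term, free of any constraint on the sign of $h$, is exactly the point at which the present argument generalizes the corresponding step of \cite{lena}.
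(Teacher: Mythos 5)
Your proposal follows essentially the same route as the paper: rewrite the Robin condition as an inhomogeneous Neumann condition $\partial_n u=-hu$, apply the $W^{2,p}$ estimate for the Neumann problem on a $C^{1,1}$ domain (the paper cites \cite[Proposition 2.5.2.3]{Grisvard}), bootstrap in $p$, and conclude via Sobolev embedding. The bookkeeping issue you flag about the regularity of the boundary datum $g=-hu$ is exactly the point handled by the bootstrap in the paper's reference \cite{Decio}, so no essential difference.
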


\begin{proof}
The statement follows from some elliptic regularity arguments. The same line of argument is used  to obtain the same statement for Steklov eigenfuctions by Decio \cite{Decio}, and   for Robin eigenfunctions with positive parameter by L\'ena \cite{lena}. 

Specifically, let $u$ be an eigenfunction of $\Delta_{\Omega}^{R,h}$ with eigenvalue $\mu$. Then $u\in H^1(\Omega)$ (see e.g. \cite{daners09}), and its trace is in $H^{1/2}(\Omega)$. By \cite[Proposition 2.5.2.3]{Grisvard}, if $v$ is a weak solution of 
\[\begin{cases}
\Delta v +v=f& \text{in $\Omega$}\\
\partial_nv=g&\text{on $\partial\Omega$}
\end{cases}\]
with $f\in L^2(\Omega)$ and $g\in W^{1-1/p,p}(\Omega)$, then $v\in W^{2,p}(\Omega)$. Taking  $p=2$, $v=u$,$f=(1+\mu)u$, and $g=-hu$, we conclude that $u\in W^{2,2}(\Omega)$. We  get $u\in W^{2,p}(\Omega)$ for all $p<\infty$ by  bootstrapping the argument, and conclude that $u\in C^{1}(\bar\Omega)$ by the Sobolev embedding theorem.  We refer the reader to the proof of \cite[Proposition 2]{Decio} for details.
\end{proof}

The second is a replacement for Green's formula, generalizing \cite[Propositions 1.7 and 4.3]{lena}. 

\begin{proposition}\label{prop:green}
Let $H=\|h^-\|_\infty$ where $h^-(x)=\max\{-h(x),0\}$. There exists a constant $C$ depending only on $\Omega$ such that if $u$ is any eigenfunction of $\Delta_{\Omega}^{R,h}$ with eigenvalue $\mu$, and $D$ is any nodal domain of $u$, then
\[\int_D |\nabla u|^2\, dx \leq (\sqrt{\mu}+CH+\sqrt{CH})^2\int_D u^2\, dx.\]
\end{proposition}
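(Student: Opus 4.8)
The plan is to avoid applying Green's formula directly on the nodal domain $D$, whose boundary need not be regular, and instead to argue entirely through the weak formulation of the eigenvalue problem. First I would record that, by Proposition \ref{prop:regularity}, $u\in C^1(\overline\Omega)$ and vanishes on the interior nodal set $\partial D\cap\Omega$, so the zero extension $w:=u\,\mathbf{1}_D$ lies in $H^1(\Omega)$, with $\nabla w=\mathbf{1}_D\,\nabla u$ and with boundary trace equal to $u$ on $\partial D\cap\partial\Omega$ and $0$ on the rest of $\partial\Omega$. Testing the weak form $\int_\Omega\nabla u\cdot\nabla\phi\,dx+\int_{\partial\Omega}h\,u\phi\,dS=\mu\int_\Omega u\phi\,dx$ against $\phi=w$ yields
\[
\int_D|\nabla u|^2\,dx+\int_{\partial D\cap\partial\Omega}h\,u^2\,dS=\mu\int_D u^2\,dx,
\]
and hence, using only the pointwise bound $-h\le h^-\le H$,
\[
\int_D|\nabla u|^2\,dx\le \mu\int_D u^2\,dx+H\int_{\partial D\cap\partial\Omega}u^2\,dS.
\]
This is the analogue of the Green-type identity in \cite[Propositions 1.7 and 4.3]{lena}, now valid for $h$ of arbitrary sign precisely because the sign information is discarded in favour of $\|h^-\|_\infty$.

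The next step handles the boundary term, and here lies the key observation: since the trace of $w$ vanishes off $\partial D\cap\partial\Omega$, one has $\int_{\partial D\cap\partial\Omega}u^2\,dS=\int_{\partial\Omega}w^2\,dS$, so the integral can be controlled by a \emph{global} trace inequality on $\Omega$ applied to $w$, with no regularity requirement on $D$ itself. Because $\partial\Omega$ is $C^{1,1}$ it admits a collar neighbourhood of fixed width, and integrating $v(0)^2=v(s)^2-2\int_0^s vv'$ in the normal direction and applying Cauchy--Schwarz gives the multiplicative trace inequality
\[
\int_{\partial\Omega}v^2\,dS\le C\Bigl(\int_\Omega v^2\,dx\Bigr)^{1/2}\Bigl(\int_\Omega|\nabla v|^2\,dx\Bigr)^{1/2}+C\int_\Omega v^2\,dx
\]
for all $v\in H^1(\Omega)$, with $C=C(\Omega)$. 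Applying this to $w$ bounds $\int_{\partial D\cap\partial\Omega}u^2\,dS$ by $C(\int_D u^2)^{1/2}(\int_D|\nabla u|^2)^{1/2}+C\int_D u^2$.

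Finally I would combine the two displays and solve a scalar quadratic. Writing $A=\int_D|\nabla u|^2$, $B=\int_D u^2$, and $t=(A/B)^{1/2}$, the estimates give $t^2\le \mu+CH\,t+CH$, i.e. $t^2-CH\,t-(\mu+CH)\le 0$. Substituting $t_0=\sqrt\mu+CH+\sqrt{CH}$ into the left-hand side produces $t_0^2-CH\,t_0-(\mu+CH)=ab+2ac+bc\ge0$ with $a=\sqrt\mu$, $b=CH$, $c=\sqrt{CH}$, so $t\le t_0$ and therefore $A\le t_0^2\,B=(\sqrt\mu+CH+\sqrt{CH})^2B$, as claimed; the specific algebraic form of the constant is exactly what this quadratic delivers, which is why the multiplicative (rather than the additive) trace inequality is the natural tool.

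I expect the main obstacle to be conceptual rather than computational: obtaining a trace constant that is \emph{uniform over the infinitely many nodal domains} $D$. This is resolved by transferring the boundary estimate onto $w$ on all of $\Omega$ and invoking the global trace inequality, whose constant depends only on the collar width and geometry of $\Omega$. The two supporting points requiring care are the membership $w\in H^1(\Omega)$ together with the identification of its trace, both of which rest on the $C^1(\overline\Omega)$ regularity from Proposition \ref{prop:regularity} and the vanishing of $u$ on the nodal set.
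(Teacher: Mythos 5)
Your argument is correct in substance but follows a genuinely different route from the paper's. The paper never tests the weak formulation against $u\mathbf{1}_D$; instead it regularizes the nodal domain by passing to the superlevel sets $D_\alpha=D\cap\{u>\alpha\}$ at Sard-regular values $\alpha$ of $u$ and $u|_{\partial\Omega}$, applies the classical Green formula to $u-\alpha$ on the resulting Lipschitz sets, controls the boundary term by integrating against the vector field $\nabla\rho$ built from the distance to $\partial\Omega$ (whose $C^{1,1}$ regularity is the content of Lemma \ref{distance function}), and only then lets $\alpha\to0$ along regular values. You replace all of this machinery by two standard facts: that the zero extension $w=u\mathbf{1}_D$ belongs to $H^1(\Omega)$ with $\nabla w=\mathbf{1}_D\nabla u$, and the global multiplicative trace inequality on the $C^{1,1}$ domain $\Omega$. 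From that point on the two proofs coincide exactly: the same inequality $w\le A+B\sqrt{w}$ with $A=\mu+CH$ and $B=CH$, and the same algebra yielding $(\sqrt{\mu}+CH+\sqrt{CH})^2$. Your version is shorter, makes Lemma \ref{distance function} unnecessary for this proposition (though the trace inequality you invoke is usually proved by precisely the paper's vector-field computation, so the boundary estimate is morally identical), and isolates cleanly why the constant is uniform over the infinitely many nodal domains. What the paper's way buys is self-containment: the level-set regularization avoids ever asserting membership of $u\mathbf{1}_D$ in $H^1(\Omega)$. That membership, together with the identification of the trace of $w$ as $u\mathbf{1}_{\partial D\cap\partial\Omega}$, is the one step in your proposal that needs a real argument rather than the one-line appeal to ``$u$ vanishes on $\partial D\cap\Omega$'', since $\partial D$ has no a priori regularity; it is a true and standard lemma, and can be justified either by noting that $u\in C^1(\overline{\Omega})$ forces $w$ to be Lipschitz on the quasiconvex domain $\Omega$ (any path inside $\Omega$ from $D$ to its complement must cross the zero set of $u$), or by exactly the $\alpha\to 0$ limiting argument the paper performs. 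With that lemma supplied, your proof is complete and delivers the identical constant.
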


\begin{proof}quality \eqref{eq:beforenew} is the same as the proof of \cite[Propositions 1.7 and 4.3]{lena}. Let $\tilde u: U\to \R$ be a $C^1$-extension of $u$ to an open neighborhood $U$ of $\bar \Omega$. Since $u\in C^{1}(\bar \Omega)$ by Proposition \ref{prop:regularity}, such an extension exists.  Let $\tilde D$ be the nodal domain of $\tilde u$ containing $D$.  For $\alpha>0$, we denote 
\[D_{\alpha}=D\cap\{u(x)>\alpha\},\qquad \tilde D_\alpha:=D\cap\{\tilde u(x)>\alpha\},\]
\[ \Gamma_\alpha:=\tilde D_\alpha\cap \partial \Omega,\quad \Sigma_\alpha:=\partial \tilde D_\alpha\cap\Omega,\quad
\gamma_\alpha:=\partial \tilde D_\alpha\cap\partial\Omega.\]
Let $\alpha>0$ be such that $\alpha$ is a regular value of both $\tilde u$ and $\tilde u|_{\partial\Omega}$. This ensures that $\partial D_\alpha=\Gamma_\alpha\cup\Sigma_\alpha\cup\gamma_\alpha$ is Lipschitz and $\gamma_\alpha$ is an $(n-2)-$dimensional submanifold of $\partial \tilde D_
\alpha$ (see \cite{lena} for details). Note that by Sard's theorem there exists an infinite sequence of such regular values approaching zero. 
 We can now apply Green's theorem to the function $u_{\alpha}=u-\alpha$ in $D_{\alpha}$ and obtain
\[\int_{D_{\alpha}}|\nabla u|^2\, dx = \int_{D_{\alpha}}{\Delta u_{\alpha}}u_{\alpha} - \int_{\Gamma_{\alpha}}u_{\alpha}\frac{\partial u_{\alpha}}{\partial n}\, ds.\]
Plugging in the information we know,
\[\int_{D_{\alpha}}|\nabla u|^2\, dx = \mu\int_{D_{\alpha}}uu_{\alpha}\, dx - \int_{\Gamma_{\alpha}}huu_{\alpha}\, ds.\]
Since $u$ and $u_{\alpha}$ are both positive on $\Gamma_{\alpha}$ and on $D_{\alpha}$,  
we conclude that
\begin{equation}\label{eq:beforenew}
\int_{D_{\alpha}}|\nabla u|^2\, dx \leq \mu\int_{D_{\alpha}}u^2\, dx + H\int_{\Gamma_{\alpha}}u^2\, ds.\end{equation}

\emph{Step 2.~} Let us bound the second term in the right-hand side of inequality \eqref{eq:beforenew} in terms of the $L^2$-norms of $u$ and $\nabla u$. Let $\rho(x)$ be a function on $\Omega$ which agrees with $\dist(x,\partial\Omega)$ in a neighborhood of the boundary $\partial\Omega$.  By Lemma \ref{distance function}, below, we have that $\dist(x,\partial\Omega)$ is $C^{1,1}$ on a neighbourhood of $\partial\Omega$. Thus, we can assume that $\rho\in C^{1,1}(\Omega)$.
Then by Green's theorem,
\[\int_{\Gamma_{\alpha}}u^2\, ds = \int_{\partial D_{\alpha}}u^2\partial_{\nu}\rho\, ds = \int_{D_{\alpha}}\nabla(u^2)\cdot \nabla\rho\, dx {-}\int_{D_{\alpha}}u^2\Delta\rho\, dx,\]
and thus
\[\int_{\Gamma_{\alpha}}u^2\, ds \leq \int_{D_{\alpha}}u^2|\Delta\rho|\, dx + \int_{D_{\alpha}}2|u|\, |\nabla u|\, |\nabla\rho|\, dx.\]
Since $\rho$ depends only on $\Omega$ and is in {$C^{1,1}(\Omega)$}, there exists a constant $C$ such that
\[\int_{\Gamma_{\alpha}}u^2\, ds \leq C\int_{D_{\alpha}}u^2\, dx + C\int_{D_{\alpha}}|u|\, |\nabla u|\, dx.\]
By the Cauchy-Schwarz inequality,
\[\int_{\Gamma_{\alpha}}u^2\, ds \leq C\int_{D_{\alpha}}u^2\, dx + C\left(\int_{D_{\alpha}}u^2\, dx\right)^{1/2}\left(\int_{D_{\alpha}}|\nabla u|^2\, dx\right)^{1/2}.\]
Plugging this into \eqref{eq:beforenew} and dividing through by $\int_{D_{\alpha}}u^2\, dx$, we get
\[\frac{\int_{D_{\alpha}}|\nabla u|^2\, dx}{\int_{D_{\alpha}}u^2\, dx}\leq \mu + CH + CH\Big(\frac{\int_{D_{\alpha}}|\nabla u|^2\, dx}{\int_{D_{\alpha}}u^2\, dx}\Big)^{1/2}.\]
Let us denote
\[w:=\frac{\int_{D_{\alpha}}|\nabla u|^2\, dx}{\int_{D_{\alpha}}u^2\, dx}.\]
It is easy to see that $w\leq A+B\sqrt{w}$ implies
\[w\leq\left(\frac B2+\sqrt{\left(\frac B2\right)^2 + A}\, \right)^2\leq\left(B+\sqrt{A}\right)^2.\]

Here, $A=\mu+CH$ and $B=CH$, so $B+\sqrt{A}\leq CH+\sqrt{CH}+\sqrt{\mu}$ and thus
\[\frac{\int_{D_{\alpha}}|\nabla u|^2\, dx}{\int_{D_{\alpha}}u^2\, dx}\leq(\sqrt{\mu}+CH+\sqrt{CH})^2.\]
The result of Proposition \ref{prop:green} now follows by taking the limit as $\alpha$ tends to zero along the sequence of regular values guaranteed by Sard's theorem. Note that both $u$ and $|\nabla u|$ are continuous on $\overline{\Omega}$ by Proposition \ref{prop:regularity}.
\end{proof}

We now prove the regularity of the distance function on a neighborhood of $\partial\Omega$ which is used in the second step of the proof of Proposition \ref{prop:green} above.

We say $\partial\Omega$ has a \textit{positive reach} if there exists $\delta>0$ such that for any  $x\in{\partial\Omega}_\delta:=\{y\in \R^n: \dist(y,\partial\Omega)<\delta\}$, there exists a unique nearest point in $\partial\Omega$. We denote the supremum of such $\delta$ by $\reach(\partial \Omega)$. We refer to \cite[page 432]{Fed59} for more details on sets with positive reach. 
\begin{lemma}\label{distance function}
    Let $\Omega\subset\R^n$ be an open bounded connected domain with $C^{1,1}$ boundary. Then $\partial\Omega$ has a positive reach  and $\dist(x,\partial\Omega)$ is $C^{1,1}$ on a neighbourhood of $\partial\Omega$.
\end{lemma}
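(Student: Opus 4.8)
The plan is to prove the lemma in three stages: extract a uniform two-sided ball condition from the $C^{1,1}$ hypothesis, deduce from it that $\partial\Omega$ has positive reach, and then bootstrap to $C^{1,1}$ regularity of the distance function via the nearest-point projection. For the first stage I would cover $\partial\Omega$ by finitely many boundary charts in which, after a rigid motion, $\partial\Omega$ is locally the graph $\{(y',\phi(y'))\}$ of a function $\phi\in C^{1,1}$ with $\phi(0)=0$ and $\nabla\phi(0)=0$. The Lipschitz bound $L$ on $\nabla\phi$ yields the quadratic estimate $|\phi(y')|\le \tfrac{L}{2}|y'|^2$, which forces the graph to lie between the two spheres of radius $1/L$ tangent to it at the origin. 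Translating back, every boundary point $p$ admits an interior and an exterior tangent ball of a fixed radius $r_0$ whose interiors lie in $\Omega$ and in $\R^n\setminus\overline\Omega$ respectively, and compactness of $\partial\Omega$ makes $r_0$ uniform. This uniform two-sided ball condition is the geometric core of the argument.

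Next, from the uniform ball condition I would deduce $\reach(\partial\Omega)\ge r_0$. Given $x$ with $\dist(x,\partial\Omega)<r_0$, a nearest point $p$ exists by compactness, and the $C^1$ regularity of $\partial\Omega$ forces $x-p$ to be orthogonal to the tangent plane $T_p\partial\Omega$. The tangent ball on the side containing $x$ then has its center on the same normal line through $p$, and a short geometric comparison shows that any second nearest point would have to coincide with $p$; hence the nearest point is unique. This places us within Federer's framework of sets of positive reach \cite{Fed59}.

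For the final stage, fix $\delta<r_0$ and work on the shell $\{0<\dist(\cdot,\partial\Omega)<\delta\}$, where the nearest-point projection $\pi$ onto $\partial\Omega$ is single-valued and, by Federer's Lipschitz estimate for the projection onto a set of positive reach, Lipschitz. Writing $b$ for the signed distance (positive inside $\Omega$), one has the classical identities $|\nabla b|=1$ and $\nabla b(x)=\nu(\pi(x))$, where $\nu$ denotes the outward unit normal field along $\partial\Omega$. Since $\partial\Omega$ is $C^{1,1}$, the Gauss map $\nu$ is Lipschitz along $\partial\Omega$; composing it with the Lipschitz projection $\pi$ shows that $\nabla b$ is Lipschitz, so $b\in C^{1,1}$ on a full two-sided neighborhood of $\partial\Omega$. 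On the interior side $\dist(\cdot,\partial\Omega)=b$, so $\dist(\cdot,\partial\Omega)$ is $C^{1,1}$ there, which is exactly the regularity invoked for $\rho$ in Proposition \ref{prop:green}.

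I expect the two ends of this chain, rather than the middle, to be the main obstacle: rigorously extracting the uniform radius $r_0$, and above all upgrading $b$ from merely Lipschitz to $C^{1,1}$. The differentiability identity $\nabla b(x)=\nu(\pi(x))$ together with the Lipschitz dependence of $\pi$ inside the reach are the delicate points, and they are precisely what Federer's curvature-measure machinery provides; for the distance function specifically they are treated in the work of Krantz and Parks and in Gilbarg and Trudinger for $C^{k}$ and $C^{1,1}$ boundaries. One subtlety worth flagging is that the \emph{unsigned} distance $\dist(\cdot,\partial\Omega)$ is genuinely only Lipschitz across $\partial\Omega$ (it behaves like $|t|$ in the normal variable), so the $C^{1,1}$ conclusion must be read through the signed distance $b$, or equivalently one-sidedly on $\overline\Omega$, which is all that the application in Proposition \ref{prop:green} requires.
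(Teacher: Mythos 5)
Your argument is correct and, at its core, is the same proof as the paper's, just packaged differently. For positive reach the paper simply cites \cite[Theorem 4.12]{Fed59}, whereas you re-derive it from a uniform two-sided tangent-ball condition extracted from the $C^{1,1}$ graph charts; this is more self-contained but establishes nothing the citation does not already give. For the regularity, the paper works in a local chart $\{(\bt,f(\bt))\}$, uses the critical-point equation for the foot point together with \cite[Theorems 4.7 and 4.8(5)]{Fed59} to obtain the explicit formula \eqref{eq3} for $\nabla d$, and observes that its right-hand side is Lipschitz. Unwinding that formula shows it is exactly your identity $\nabla b(x)=\nu(\pi(x))$ written in coordinates (the vector $(1+|\nabla f(\bt)|^2)^{-1/2}(-\nabla f(\bt),1)$ is the unit normal at the foot point), and the Lipschitz continuity of the paper's right-hand side likewise rests on the Lipschitz dependence of the foot point $\bt(\bx,y)$ on $(\bx,y)$, i.e.\ on the same Federer bound for the nearest-point projection that you invoke explicitly. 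So your coordinate-free regularity argument and the paper's computation in graph coordinates are two renderings of one proof. One point where you are more careful than the paper: the unsigned distance is genuinely not differentiable on $\partial\Omega$ itself, since its gradient flips sign across the boundary (this is visible in the sign factor $(f(\bt)-y)/|y-f(\bt)|$ in \eqref{eq3}), so the $C^{1,1}$ conclusion must be read for the signed distance, or one-sidedly on $\overline{\Omega}$ --- which, as you correctly note, is all that the construction of $\rho$ in Proposition \ref{prop:green} requires.
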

In \cite{KP81}, Krantz and Parks showed that if $\partial\Omega$  is $C^1$ and has a positive reach then  $\dist(x,\partial\Omega)$ is also $C^1$. Note that assuming $\partial\Omega$ is $C^1$ is not enough to conclude that it has a positive reach, see \cite[Example 4]{KP81}. The proof of  Lemma \ref{distance function} follows the same idea of the proof of \cite[Theorem 2]{KP81} together with the results of \cite[Section 4]{Fed59}. 
\begin{proof}[Proof of Lemma \ref{distance function}] The fact that $\reach(\partial\Omega)>0$ is a consequence of \cite[Theorem 4.12]{Fed59}. For the regularity of the distance function, we repeat the same argument as in the proof of \cite[Theorem 2]{KP81}. Namely, for any $p\in \partial\Omega$,  we choose a coordinate system putting $p$ at the origin such that in a neighbourhood $U$ of $p=\origin$ we have $$U\cap\partial \Omega=\left\{(\bt,f(\bt)): \bt\in B_r^{n-1}(\origin)\subset\R^{n-1}, f\in C^{1,1}( B_r^{n-1}(\origin))\right\}$$ with $f(\origin)=0$ and $\nabla f(\origin)=\origin$. Let $\delta\in (0,\reach(\partial\Omega))$ be sufficiently small so that for every $(\bx,y)\in B_\delta^n(\origin)$, there exists a unique nearest point $(\bt,f(\bt))\in \partial\Omega$ with $\bt\in B_r^{n-1}(\origin)$. Since $\bt(\bx,y)$ is an extremum of $|\bt-\bx|^2+|f(\bt)- y|^2$. We have 
\begin{equation}\label{eq1}(\bt-\bx)-(f(\bt)-y))\nabla f(\bt)=0.\end{equation}
Let $d(\bx,y):=\dist((\bx,y),\partial\Omega)=\left(|\bt-\bx|^2+|f(\bt)- y|^2\right)^{1/2}$. By \cite[Theorem 4.8(5)]{Fed59}, $d$ is differentiable on $B^{n}_\delta(\origin)\setminus\partial\Omega$ and
\begin{equation}\label{eq2}\nabla d=\frac{1}{d}(\bx-t,f(\bt)-y),\qquad (\bx,y)\in B_\delta^{n}(\origin)\setminus\partial\Omega.\end{equation}
Using \eqref{eq1}, we can write $d(\bx,y)=|f(\bt)-y)|(1+|\nabla f (\bt)|^2)^{1/2}.$ By plugging this into \eqref{eq2} and using \eqref{eq1}, we get
\begin{equation}\label{eq3}\nabla d=\frac{|(1+|\nabla f (\bt)|^2)^{-1/2}}{|y-f(\bt)|}((f(\bt)-y)\nabla f(\bt),f(\bt)-y),\qquad (\bx,y)\in B_\delta^{n}(\origin)\setminus\partial\Omega.\end{equation}
The right-hand side of \eqref{eq3} is $C^{0,1}$. Therefore by \cite[Theorem 4.7]{Fed59} we conclude that the above identity holds on $B_\delta^{n}(\origin)$, and thus that $d$ is $C^{1,1}$ on $B_{\delta}^{n}(\origin)$. Since $\partial\Omega$ is compact, the result follows by taking a finite open cover.
\end{proof}
\begin{remark}\label{key props}
  Although Propositions \ref{prop:genericity} and \ref{prop:green} are stated in the Euclidean setting, they continue to hold for a bounded $C^{1,1}$ domain in a complete Riemannian manifold.  Indeed the argument showing the  regularity of the eigenfunctions in Proposition~\ref{prop:regularity} and the Green's formula used in the proof of Proposition \ref{prop:green} clearly extend to the Riemannian setting. Moreover, the proof of the regularity of $\rho$ is verbatim as in the proof of Lemma \ref{distance function} if $\partial\Omega$ has a positive reach, which itself follows from   \cite[Theorem 4.12]{Fed59} by considering a finite cover of $\partial\Omega$. 
\end{remark}
To proceed with the proof of Theorem \ref{Rpleijel}, we need to recall the definitions of bulk and boundary nodal domains from \cite{lena}. Let $\partial\Omega_\delta$ denote the $\delta$-neighbourhood of $\partial\Omega$ in $\Omega$, 
\[\partial\Omega_\delta:=\{x\in\Omega: \dist(x,\partial\Omega)<\delta\}.\]

Then by \cite[Lemma 2.1]{lena}, for any sufficiently small $\delta>0$, there exist constants $0<c<C$ and functions
$\phi^\delta_0,\phi^\delta_1\in C^{1,1}(\Omega)$ such that 
$\supp(\phi^\delta_0)\subset \Omega\setminus\partial\Omega_{c\delta},$  $\supp(\phi^\delta_1)\subset \partial\Omega_{C\delta},$ and $(\phi^\delta_0)^2+(\phi^\delta_1)^2\equiv1$.

Let $u$ be an eigenfunction of $\Delta_{\Omega}^{R,h}$ and $D$ a nodal domain of $u$. We  consider the two functions $u_0:=\phi^\delta_0u$ and $u_1:=\phi_1^\delta u$. For a given $\epsilon>0$, we say that $D$ is  an \emph{{$\epsilon$-bulk} nodal domain} if  
 \begin{equation}\label{bulkboundary}\int_{D}u_0^2\,{\rm d}x\ge (1-\epsilon)\int_{D}u^2{\rm d}x,\end{equation}
 and $D$ is an  \emph{{$\epsilon$-boundary} nodal domain} if 
 \[\int_{D}u_1^2\,{\rm d}x> \epsilon\int_{D}u^2{\rm d}x.\]

\begin{proof}[Proof of Theorem \ref{Rpleijel}]
Let $u$ be an eigenfunction with eigenvalue $\mu_k$, and let $u_0=\phi^\delta_0u$ and $u_1=\phi_1^\delta u$, with $\delta=\mu_k^{-\frac{1}{4}}$. Fix a sufficiently small $\epsilon>0$, and let $\{D_j\}_{j=1}^{N_B(k,\epsilon)}$ be the family of {$\epsilon$-bulk} nodal domains of $u$,
and $\{D_j^b\}_{j=1}^{N_b(k,\epsilon)}$ the family of {$\epsilon$-boundary} nodal domains of $u$.
 Let $B_k$ be a ball with first Dirichlet eigenvalue $\lambda^D_1(B_k)=\mu_k$.  For given $\epsilon_1,\epsilon_2>0$ to be determined later,  we categorise the  nodal domains into three disjoint sets as in \cite{don,Bourgain}. 
\begin{itemize}\label{donellyargument}
    \item[I.] $|D_j|\ge |B_k|(1+\epsilon_1)$;
    \item[II.]$|D_j|\le |B_k|(1+\epsilon_1)$ and $A(D_j)\ge\epsilon_2$;
    \item[III.]$|D_j|\le |B_k|(1+\epsilon_1)$ and $A(D_j)<\epsilon_2$,
\end{itemize}
where $A(D)$ is the Fraenkel asymmetry and $|\cdot|$ denotes the $n-$volume of the set. We refer to the proof of Claim 2 below where the choices of $\epsilon_1$ and $\epsilon_2$ are discussed. Crucially, we emphasise that their values are independent of $\epsilon$.  \\Recall that 
\[A(D)=\inf\left\{\frac{|D\Delta B|}{|B|}:\text{ $B$ is a ball with $|B|=|D|$}\right\}.\]
Let $N_{\star}$ be the number of {$\epsilon$-bulk} nodal domains satisfying condition $\star={\rm I,II}$ or II. Note that $N_B(k,\epsilon)=N_{\rm I}+N_{\rm II}+N_{\rm III}$.\\

For any $\epsilon$-bulk nodal domain $D$, we have the following upper bound for $\lambda_1^D(D)$ calculated  in \cite[page 291]{lena}:
\begin{equation*}
\lambda_1^D(D_j)\le \frac{\int_{D_j}|\nabla u_0|^2}{\int_{D_j}u_0^2}
\le\frac{1+\epsilon}{1-\epsilon}\frac{\int_{D_j}|\nabla u|^2}{\int_{D_j}u^2}+C_0\frac{1+\frac{1}{\epsilon}}{1-\epsilon}\sqrt{\mu_k},
\end{equation*}
where $C_0$ is  a constant  independent of $\mu_k$. Applying  Proposition \ref{prop:green} we get
\begin{equation}\label{inq:lena}
\lambda_1^D(D_j)\le \frac{\int_{D_j}|\nabla u_0|^2}{\int_{D_j}u_0^2}
\le\frac{1+\epsilon}{1-\epsilon}{\mu_k}+C_1\sqrt{\mu_k}+C_2,
\end{equation}
where $C_1=2(CH+\sqrt{CH})+C_0\frac{1+\frac{1}{\epsilon}}{1-\epsilon}$ and $C_2=(CH+\sqrt{CH})^2$. \\

 \noindent  We first estimate the number {$\epsilon$-bulk} nodal domains in each category I, II, III. \\

\noindent \textit{Case I.} For any {$\epsilon$-bulk} nodal domain $D_j$ of type I, we trivially have 
\begin{equation}\label{inq:1}
\omega_n j^n_{\frac{n-2}{2}}(1+\epsilon_1)=\lambda_1^D(B_k)^{n/2}{|B_k|}(1+\epsilon_1)\le \mu_k^{n/2}|D_j|.
\end{equation}
After summing over all  {$\epsilon$-bulk} nodal domains of type I and rearranging inequality \eqref{inq:1}, we get
\begin{eqnarray}
N_I\le \mu_k^{\frac{n}{2}}\frac{|\Omega_I|}{\omega_n j^n_{\frac{n-2}{2}}(1+\epsilon_1)},
\end{eqnarray}
where $\Omega_I$ is the union of all  {$\epsilon$-bulk} nodal domains of type I. \\

\noindent\textit{Case II.}
For any $\epsilon$-bulk nodal domain $D_j$ of type II,  we use  the quantitative Faber-Krahn inequality~\cite{Brascoetal} to get 
\begin{equation}\label{inq:QFK}
\lambda_1^D(B_k)^{n/2}{|B_k|}(1+\epsilon_3)\le\lambda_1^D(B_k)^{n/2}{|B_k|}(1+c(n)A(D_j)^2)\le \lambda_1^D(D_j)^{n/2}|D_j|,
\end{equation}
where $\epsilon_3=c(n)\epsilon_2^2$. We  use inequality \eqref{inq:lena} and sum over all type II {$\epsilon$-bulk} nodal domains to get
 \begin{eqnarray}
N_{\rm II}\le \left( \frac{1+\epsilon}{1-\epsilon}{\mu_k}+ C_1\sqrt{\mu_k}+C_2\right)^{\frac{n}{2}}\frac{|\Omega_{\rm II}|}{\omega_n j^n_{\frac{n-2}{2}}(1+\epsilon_3)},
\end{eqnarray}
where $\Omega_{\rm II}$ is the union  {$\epsilon$-bulk} nodal domains of type II. \\

\noindent \textit{Case III.} For any $\epsilon$-bulk nodal domain $D_j$ of type III,  the Faber-Krahn inequality implies that 
\begin{equation*}
\lambda_1^D(B_k)^{n/2}{|B_k|}\le \lambda_1^D(D_j)^{n/2}|D_j|,
\end{equation*}
We use inequality \eqref{inq:lena} and sum over all $\epsilon$- bulk nodal domains of type~III  to get
\begin{eqnarray}
N_{\rm III}\le \left( \frac{1+\epsilon}{1-\epsilon}{\mu_k}+ C_1\sqrt{\mu_k}+C_2\right)^{\frac{n}{2}}\frac{|\Omega_{\rm III}|}{\omega_n j^n_{\frac{n-2}{2}}},
\end{eqnarray}
where $\Omega_{\rm III}$ is the union  {$\epsilon$-bulk} nodal domains of type III. \\

Now let us recall the  Weyl asymptotic for the Robin problem (see e.g. \cite{BS80,FG12}):
\begin{equation}\label{weyl}
\lim_{k\to\infty}\frac{\mu_k^{\frac{n}{2}}|\Omega|}{k}=\frac{(2\pi)^n}{\omega_n},
\end{equation}
and the definition of $\gamma(n)$, $$\gamma(n):=\frac{(2\pi)^n}{\omega_n^2 j^n_{\frac{n-2}{2}}}.$$ 
Hence, \label{donnellyargument2}
\begin{eqnarray*}
   \limsup_{k\to\infty}\frac{N_B(k,\epsilon)}{k}&=& \limsup_{k\to\infty}\frac{N_{\rm I}+N_{\rm II}+N_{\rm III}}{k}\\
&\le&\gamma(n)\left(\frac{\alpha_{\rm I}}{1+\epsilon_{\rm 1}}+\left( \frac{1+\epsilon}{1-\epsilon}\right)^{\frac{n}{2}}\left(\frac{\alpha_{\rm II}}{1+\epsilon_3}+\alpha_{\rm III}\right)\right),
\end{eqnarray*}
where $\alpha_\star=\frac{|\Omega_{\star}|}{|\Omega|}$,  $\star={\rm I, II, III}$.\\

\noindent To conclude the proof, we need to prove the following statements.\\

\noindent\textit{Claim 1.} For the $\epsilon$-boundary nodal domains we have 
\[\limsup_{k\to\infty}\frac{N_b(k,\epsilon)}{k}=0.\]
\noindent\textit{Proof of Claim 1.~} The proof follows from \cite[Section 2.4]{lena} and we do not repeat the details of the calculation. The main idea is to extend $u_1$ by reflection to   a function $v$ on a neighbourhood of each {$\epsilon$-boundary} nodal domain and apply  the Faber-Krahn inequality on each connected component of the interior of the  support of $v$. Here, the regularity of the eigenfunction discussed in  Proposition \ref{prop:regularity} is needed. Then we can get an estimate for $N_b(k,\epsilon)$:
\begin{equation}\label{boundaryterm}N_b(k,\epsilon)\le C_1\alpha_k^{-\frac{1}{4}}\epsilon^{-\frac{n}{2}}\left (\alpha_k+C_2\alpha_k^{\frac{1}{2}}\right)^{\frac{n}{2}},
\end{equation}
where $\alpha_k=(\sqrt{\mu_k}+CH+\sqrt{CH})^2$.
For any fixed $\epsilon>0$, the leading term in the right-hand side of \eqref{boundaryterm} goes to zero as $k\to \infty$ because of the Weyl law \eqref{weyl}, completing the proof of Claim 1.  \qed\\

\noindent\textit{Claim 2.} {There exist suitable choices} of {$\epsilon_0$,} $\epsilon_1$, and $\epsilon_2$ { such that for any $\epsilon \in(0,\epsilon_0)$,} the proportion of {$\epsilon$-bulk} nodal domains of type III {corresponding to $\mu_k$, $k\ge k(\epsilon)$} cannot be one, i.e. $\alpha_{\rm III}:=\frac{|\Omega_{\rm III}|}{|\Omega|}\le c(n)<1$. \\

\noindent\textit{Proof of Claim 2.~} The proof of the claim follows by adapting the argument given in  \cite[Page 59]{don}.  Recall that for any nodal domain  $D$ of type III,  we have 
\begin{equation}\label{typeIII}|D|\le(1+\epsilon_1)|B_k|, \quad\text{and}\quad A(D)< \epsilon_2.\end{equation} The second inequality means that there exists an approximate ball $B$ such that $|D|=|B|$ and $|D\Delta B|< \epsilon_2|B|$.

We now show that for any pair of  type III nodal domains $D$ and $\tilde D$,  their approximate balls $B$ and  $\tilde B$  almost have the same radius and their overlap can be made small if we choose $\epsilon$, $\epsilon_1$, and $\epsilon_2$  small enough.

By the Faber-Krahn inequality we have 
\begin{equation}\label{eq:fk}|B_k|\le \left(\frac{\lambda_1^D(D)}{\lambda_1^D(B_k)}\right)^{\frac{n}{2}}|D|=\left(\frac{\lambda_1^D(D)}{\mu_k}\right)^{\frac{n}{2}}|D|.\end{equation}
For any fixed $\epsilon$, we use the estimate in \eqref{inq:lena}.  It implies that for $k\ge k(\epsilon)$
we can have 
\begin{equation*}\label{eq:FKestimate}
    \frac{\lambda_1^D(D)}{\mu_k}\le \frac{1+2\epsilon}{1-\epsilon}.
\end{equation*}
Putting \eqref{typeIII} and \eqref{eq:fk} together, we get
\begin{equation}
\left(\frac{1-\epsilon}{1+2\epsilon}\right)^{\frac{n}{2}}|B_k|\le |D|\le (1+\epsilon_1)|B_k|
\end{equation}
Thus,
\[\left(\frac{1-\epsilon}{1+2\epsilon}\right)^{\frac{n}{2}}(1+\epsilon_1)^{-1}\le\frac{|\tilde D|}{|D|}=\frac{|\tilde B|}{|B|}\le (1+\epsilon_1)\left(\frac{1+2\epsilon}{1-\epsilon}\right)^{\frac{n}{2}}. \]
Hence, if we choose $\epsilon, \epsilon_1>0$ small enough, it guarantees that the approximate balls  almost have the same radius. 
Moreover, we have $|B\cap \tilde B|\le |D\Delta B|+|\tilde D\Delta \tilde B|<\epsilon_2(|B|+|\tilde B|)=\epsilon_2(|D|+|\tilde D|)$, and by a choice of $\epsilon_2$ small enough, their overlap can be made small. Note that the $\epsilon_1$ and $\epsilon_2$  can be chosen independent of $\epsilon$.

We can complete the argument by showing that the proportion of $\Omega$ covered by type III nodal domains is uniformly bounded away from  1. By the result of Rogers \cite{Rog58}, the  packing density $\rho(n)$  of  balls of the same radius in $\R^n$ is strictly less than one. Roughly speaking, we can obtain a packing for the domain $\Omega$ by slightly reducing the size of the balls. The ratio $\alpha_{III}$ of the volume of type III nodal domains to the total volume is then approximately equal to $(1+o(1))\rho(n)$. See \cite[Page 59]{don} for more details. \qed\\

Therefore, 
\begin{eqnarray*}
\limsup_{k\to\infty}\frac{N_k}{k}&=&\limsup_{k\to\infty}\frac{N_B(k,\epsilon)}{k}\\
&\le&\gamma(n)\left(\frac{\alpha_{\rm I}}{1+\epsilon_{\rm 1}}+\left( \frac{1+\epsilon}{1-\epsilon}\right)^{\frac{n}{2}}\left(\frac{\alpha_{\rm II}}{1+\epsilon_3}+\alpha_{\rm III}\right)\right).
\end{eqnarray*}
By Claim 2, we have $\alpha_{\rm III}\le c(n)<1$. Note that $\epsilon_3$ does not depend on $\epsilon$.  The statement of Theorem \ref{Rpleijel} follows by taking $\epsilon\to 0$. 
\end{proof}

\begin{theorem}\label{Rpleijel mflds}
The statement of Theorem \ref{Rpleijel} holds for $\Omega$ a bounded open domain with $C^{1,1}$ boundary in a complete Riemannian manifold.  
\end{theorem}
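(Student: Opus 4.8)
The plan is to follow the proof of Theorem \ref{Rpleijel} step by step and verify that each ingredient survives the passage from $\R^n$ to a complete Riemannian manifold $(M,g)$. The boundary ingredients are already in place: Remark \ref{key props} records that Propositions \ref{prop:regularity} and \ref{prop:green}, as well as the $C^{1,1}$ regularity of the distance function in Lemma \ref{distance function}, continue to hold for a bounded $C^{1,1}$ domain $\Omega\subset M$ (whose closure is compact by completeness and boundedness). Consequently L\'ena's decomposition of the nodal set into $\epsilon$-bulk and $\epsilon$-boundary domains, and the estimate \eqref{inq:lena} relating $\lambda_1^D(D_j)$ to $\mu_k$, carry over unchanged, and the argument for $\limsup_{k\to\infty} N_b(k,\epsilon)/k=0$ goes through verbatim, since it relies only on reflection, Green's formula, and the Faber--Krahn inequality applied on small sets. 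The Weyl law \eqref{weyl} for the Robin problem likewise holds on $M$.

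The substantive point is that the analysis of Cases I--III, and the packing argument, invoke the Euclidean Faber--Krahn inequality and its quantitative refinement \cite{Brascoetal}, neither of which holds exactly on a curved manifold. Here I would use the B\'erard--Meyer mechanism \cite{BM82}. Every bulk nodal domain $D_j$ entering the count satisfies $\lambda_1^D(D_j)=(1+o(1))\mu_k$ by \eqref{inq:lena}, so by Faber--Krahn its volume is $O(\mu_k^{-n/2})$ and it shrinks as $k\to\infty$. Such a domain is contained in a geodesic ball of radius $O(\mu_k^{-1/2})$ on which $g$ is $C^1$-close to the Euclidean metric, with the discrepancy controlled by the curvature and injectivity radius of $M$ on the compact set $\overline\Omega$. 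Transplanting to normal coordinates therefore turns the exact Euclidean Faber--Krahn inequality into an asymptotic one: the inequalities \eqref{inq:1}, \eqref{inq:QFK}, and the bound used in Case III all acquire a multiplicative factor $1+o(1)$ as $k\to\infty$. Since these factors tend to $1$, they are absorbed into the $\epsilon$-dependent error and do not affect the value of $\gamma(n)$ after the Weyl law is applied and $\epsilon\to0$ is taken.

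The same reasoning handles the quantitative Faber--Krahn inequality and the packing argument, which is the step I expect to require the most care. One measures the Fraenkel asymmetry $A(D_j)$ against geodesic balls, and the stability estimate of \cite{Brascoetal} holds asymptotically on $M$ for the same shrinking reason; indeed Donnelly \cite{don} already carried out precisely this quantitative bulk analysis on closed manifolds, and those estimates apply here to the bulk domains. Rogers' packing bound \cite{Rog58} is used only for balls of a common small radius, and since small geodesic balls are uniformly close to Euclidean balls the packing density remains strictly below $1$, yielding $\alpha_{\rm III}\le c(n)<1$ exactly as in the Euclidean case. The main obstacle is thus bookkeeping rather than new ideas: one must check that all curvature-induced error terms are uniform over the family of shrinking bulk domains and vanish as $k\to\infty$, so that the resulting bound $\limsup_{k\to\infty} N_k/k\le\gamma(n)-\epsilon(n)$ coincides with the Euclidean one.
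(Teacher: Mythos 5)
Your overall strategy coincides with the paper's: both reduce the manifold case to the Euclidean one by invoking Remark \ref{key props} for the boundary ingredients (regularity of eigenfunctions, the Green-type estimate, positive reach of $\partial\Omega$), keeping the same $\epsilon$-bulk/$\epsilon$-boundary decomposition and the Weyl law, and then appealing to Donnelly's Section 3 analysis, in which the exact Faber--Krahn inequality and its quantitative refinement are replaced by asymptotic versions valid on small sets, where the metric is $C^1$-close to Euclidean (the B\'erard--Meyer mechanism). The packing step is likewise handled the same way in both.

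There is, however, one step whose justification is backwards. You assert that \emph{every} bulk nodal domain satisfies $\lambda_1^D(D_j)=(1+o(1))\mu_k$ by \eqref{inq:lena} and hence, ``by Faber--Krahn, its volume is $O(\mu_k^{-n/2})$ and it shrinks.'' First, \eqref{inq:lena} is only an upper bound on $\lambda_1^D(D_j)$; second, Faber--Krahn reads $\lambda_1^D(D_j)^{n/2}|D_j|\geq \omega_n j^n_{\frac{n-2}{2}}$, so combined with that upper bound it gives a \emph{lower} bound $|D_j|\gtrsim \mu_k^{-n/2}$, not an upper bound. Nothing forbids a nodal domain of large volume --- Case I of the Euclidean proof exists precisely to count such domains, and it uses no isoperimetric input. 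The repair, which is what the paper's proof actually does, is to use the weak and quantitative Faber--Krahn inequalities on $M$ only for domains of volume below a fixed threshold $v_\epsilon$: Cases II and III involve only domains of volume at most $(1+\epsilon_1)|B_k|\to 0$, which eventually fall below any threshold, while the number of bulk domains of volume exceeding $v_\epsilon$ is at most $|\Omega|/v_\epsilon=O(1)$ and so is negligible after dividing by $k$. With that correction your argument is the paper's argument.
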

\begin{proof} It is enough to show that the key ingredients of the proof in the Euclidean setting can be adapted to the Riemannian setting. From Remark \ref{key props}, we know that the Propositions \ref{prop:regularity} and \ref{prop:green} hold in the Riemannian setting. 
We consider the decomposition of nodal domains into {$\epsilon$-bulk} and {$\epsilon$-boundary} nodal domains as defined in  \eqref{bulkboundary} and adapt the argument on pages \pageref{donellyargument}-\pageref{donnellyargument2} as in \cite[Section 3]{don}. The main idea is to use a weaker version of the Faber-Krahn inequality and its quantitative version, which hold only for {$\epsilon$-bulk} nodal domains with volume smaller than a given threshold number. The number of {$\epsilon$-bulk} nodal domains {with volume larger than the threshold number}
is bounded independent of $k$ and so does not contribute to the limit. Hence we can repeat the same argument as in the proof of Theorem \ref{Rpleijel}. We refer the reader to \cite[Section 3]{don} for more details. 

\end{proof}

\section{Pleijel constant for the Robin problem on a Euclidean ball}

Throughout this section we will consider the Robin problem on a unit ball in $\mathbb R^d$, $d\geq 2$.  We assume throughout that $h=\sigma$ is a constant, so our boundary condition is
\[\partial_{n}u+\sigma u =0.\]
 Note that $\sigma\ge 0$ yields non-negative spectrum, but we need not assume this.

We begin by writing down the spectrum of this problem for arbitrary $\sigma\in\mathbb R$. To start, consider the Dirichlet spectrum of the ball in $\mathbb R^d$. As it is well known, it consists of the squares of zeroes of Bessel functions, specifically
\[\lambda_{m,k} = j_{\nu,k}^2,\quad m\ge 0, k\in\mathbb N\]
where $\nu=m+d/2-1$ and $j_{\nu,k}$ is the $k$th positive zero of $J_{\nu}(z)$. These eigenvalues have multiplicity $\kappa_{m,d}$, where
\[\kappa_{0,d}=1,\quad\textrm{ otherwise } \kappa_{m,d}=\binom{m+d-1}{d-1}-\binom{m+d-3}{d-1}.\]
Note that when $d=2$, $\kappa_{m,d}=2$ for all $m\in\mathbb N$. The multiplicity $\kappa_{m,d}$ is simply the multiplicity of the space of spherical harmonics with eigenvalue $m(m+d-2)$.

We now consider the Robin spectrum when $\sigma\geq 0$. In this case, the eigenvalues are all non-negative. They are
\[\mu_{m,k} = x_{m+d/2-1,k}^2,\quad m\ge 0,~ k\in\mathbb N\]
again with multiplicity $\kappa_{m,d}$. However, the values $x_{\nu,k}$ are not just Bessel function zeroes but are the $k$th non-negative roots of the equation
\[\frac{zJ_{\nu}'(z)}{J_{\nu}(z)} = \frac d2-1-\sigma.\]
There is  exactly one such $x_{\nu,k}$ in between each pair of roots of $J_{\nu}(k)$ \cite[pp. 480-481]{watson}. Furthermore, $x_{\nu,1}< j_{\nu,1}$.

For $\sigma<0$, there are finitely many negative Robin eigenvalues. All of the values $x_{\nu,k}^2$ are still eigenvalues, regardless of the sign of $\sigma$. And for $m\geq -\sigma$, we still have $0\le x_{\nu,1}< j_{\nu,1}$. However, when $m<-\sigma$, this no longer holds and instead $x_{\nu,1}>j_{\nu,1}$. The reason is that there is now a negative eigenvalue, specifically $-(x_{\nu,0})^2$, where $x_{\nu,0}$ is the unique positive root of the equation
\[\frac{zI_{\nu}'(z)}{I_{\nu}(z)}=\frac d2-1-\sigma,\]
with $I_{\nu}(z)$ the modified Bessel function. This eigenvalue has multiplicity $\kappa_{m,d}$ as before. Thus the number of negative Robin eigenvalues, with multiplicity, is
\[N_{\sigma}:=\sum_{m=0}^{\lceil -\sigma\rceil}\kappa_{m,d}.\]

To summarize, for $m\geq -\sigma$, each $x_{\nu,k}^2$ is an eigenvalue, and
\begin{equation}\label{eq:interlacinglarge}0<x_{\nu,1}<j_{\nu,1}<\dots<x_{\nu,k}<j_{\nu,k}<x_{\nu,k+1}<\dots
\end{equation}
However, for $m<-\sigma$, each $x_{\nu,k}^2$ is an eigenvalue, but there is also a corresponding negative eigenvalue $-x_{\nu,0}^2$, and we have instead that
\begin{equation}\label{eq:interlacingsmall}0<j_{\nu,1}<x_{\nu,1}<\dots<x_{\nu,k}<j_{\nu,k+1}<x_{\nu,k+1}<\dots\end{equation}

When enumerating both Dirichlet and Robin spectra we have to put the eigenvalues  in order. So let us now do this. Define the index function $\iota(m,k)$ to be the smallest possible index of the Robin eigenvalue $\mu_{m,k}=x_{m,k}^2$, with a similar definition for $\iota_D(m,k)$. The words ``smallest possible" are required because these eigenvalues have multiplicity, but nothing in the subsequent work would change even if different choices were made for each pair $(m,k)$.

The following technical lemma is key to the proof of Theorem \ref{ball Pleijel constant}.
\begin{lemma}\label{lem:technical}
    Suppose that $(m_i,k_i)$ is a sequence in $\mathbb Z_{\geq 0}\times\mathbb N$. Then
    \[\lim_{i\to\infty}\frac{\iota(m_i,k_i)}{\iota_D(m_i,k_i)} = 1.\]
\end{lemma}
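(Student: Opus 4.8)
The plan is to rewrite both index functions in terms of eigenvalue counting functions and then compare those using two facts: the Dirichlet and Robin Weyl laws of the ball share the same leading term, and corresponding Robin and Dirichlet eigenvalues are asymptotically equal. Write $\nu=m+d/2-1$, and let $N^R(\Lambda)$ and $N^D(\Lambda)$ denote the number of Robin, respectively Dirichlet, eigenvalues $\le\Lambda$, counted with multiplicity. Since the statement concerns the limit as $i\to\infty$ and only finitely many pairs $(m,k)$ produce a bounded index, I would first reduce to the case $m_i+k_i\to\infty$, which is exactly the regime in which $\iota(m_i,k_i)\to\infty$ and the only one needed in the application.

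First I would express the index functions through the counting functions. By definition $\iota(m,k)$ is $1$ plus the number of Robin eigenvalues strictly below $\mu_{m,k}$, so
\[\iota(m,k)=N^R(\mu_{m,k})-\mathrm{mult}(\mu_{m,k})+1,\qquad \iota_D(m,k)=N^D(\lambda_{m,k})-\mathrm{mult}(\lambda_{m,k})+1.\]
Under Assumption \ref{assumption:bourget} the multiplicity of $\mu_{m,k}$ is exactly $\kappa_{m,d}=O(m^{d-2})$, and the finitely many negative eigenvalues occurring when $\sigma<0$ add only a bounded constant to $N^R$. Because $x_{\nu,k}\gtrsim\nu+k\gtrsim m+k$ (Bessel zeros grow at least linearly in order and index), the Weyl law gives $N^R(\mu_{m,k})\gtrsim(m+k)^d$, so these corrections are negligible and $\iota(m,k)=N^R(\mu_{m,k})(1+o(1))$, $\iota_D(m,k)=N^D(\lambda_{m,k})(1+o(1))$ as $m+k\to\infty$.

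Next I would invoke the Weyl law: both $N^R(\Lambda)$ and $N^D(\Lambda)$ equal $C_d\Lambda^{d/2}(1+o(1))$ with the \emph{same} leading constant $C_d=\omega_d|\disk|/(2\pi)^d$, since the boundary condition affects only lower-order terms; the Robin case is \eqref{weyl}. Hence, decomposing
\[\frac{N^R(\mu_{m,k})}{N^D(\lambda_{m,k})}=\frac{N^R(\mu_{m,k})}{C_d\mu_{m,k}^{d/2}}\cdot\left(\frac{\mu_{m,k}}{\lambda_{m,k}}\right)^{d/2}\cdot\frac{C_d\lambda_{m,k}^{d/2}}{N^D(\lambda_{m,k})},\]
the two outer factors tend to $1$ since $\mu_{m,k},\lambda_{m,k}\to\infty$. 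For the middle factor I would use the interlacing relations \eqref{eq:interlacinglarge} and \eqref{eq:interlacingsmall}, which trap $x_{\nu,k}$ in a single gap between consecutive zeros of $J_\nu$; thus $|x_{\nu,k}-j_{\nu,k}|$ is at most the width of that gap, which by the asymptotics of Bessel zeros is $o(j_{\nu,k})$ as $\nu+k\to\infty$. Therefore $x_{\nu,k}/j_{\nu,k}\to1$, the middle factor tends to $1$, and combining all three factors yields $\iota(m_i,k_i)/\iota_D(m_i,k_i)\to1$.

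The main obstacle is uniformity. All the $o(1)$ statements must hold along an arbitrary sequence with $m_i+k_i\to\infty$, a regime that interleaves large order $\nu$ with bounded $k$ and large $k$ with bounded $\nu$. The delicate step is the bound on the relative gap width $|x_{\nu,k}-j_{\nu,k}|/j_{\nu,k}$ of Bessel zeros, which must be shown to vanish simultaneously in both variables (via McMahon's expansion for large $k$ and the Airy-scale behaviour of the first zeros for large $\nu$). Once this uniform estimate is in hand, the Weyl-law and multiplicity reductions are routine.
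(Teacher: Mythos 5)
Your overall strategy --- rewriting $\iota$ and $\iota_D$ via eigenvalue counting functions, invoking the common leading term of the Dirichlet and Robin Weyl laws, and reducing everything to $x_{\nu,k}/j_{\nu,k}\to1$ --- is sound and is essentially a repackaging of the paper's argument, which sandwiches $\iota(m,k)$ between $\iota_D(m,k-1)$ and $\iota_D(m,k+1)$ (up to multiplicity sums of lower order) and then proves $j_{\nu,k+1}/j_{\nu,k}\to1$ uniformly. Your handling of the multiplicity correction and of the finitely many negative eigenvalues is fine.

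The genuine gap is in your treatment of the middle factor when $k=1$. You assert that the interlacing relations trap $x_{\nu,k}$ in a single gap between consecutive zeros of $J_\nu$, so that $|x_{\nu,k}-j_{\nu,k}|$ is bounded by a gap width and is therefore $o(j_{\nu,k})$. This is correct for $k\ge2$, where \eqref{eq:interlacinglarge} and \eqref{eq:interlacingsmall} give $j_{\nu,k-1}<x_{\nu,k}<j_{\nu,k+1}$ and the adjacent gaps have width $O(\nu^{1/3})=o(j_{\nu,k})$. But for $k=1$ and $m\ge-\sigma$ the interlacing gives only $0<x_{\nu,1}<j_{\nu,1}$: the interval containing $x_{\nu,1}$ is $(0,j_{\nu,1})$, whose length is $j_{\nu,1}$ itself, so no estimate on spacings of zeros of $J_\nu$ can force $x_{\nu,1}/j_{\nu,1}\to1$. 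Since the lemma must hold for arbitrary sequences, including $k_i\equiv1$ with $m_i\to\infty$ (a regime that genuinely occurs in the application), this case cannot be waved away. The paper closes it with the separate lower bound \eqref{eq:kequalsone}, $x_{\nu,1}\ge\sqrt{m^2-\sigma^2}$, which together with $j_{\nu,1}=\nu+O(\nu^{1/3})$ yields the missing ratio limit. You correctly flag two-variable uniformity as the delicate point, but you frame it entirely as a question about Bessel-zero spacings; the $k=1$ case needs a different input, namely a lower bound on the first Robin root itself, and without it your argument does not cover all sequences $(m_i,k_i)$.
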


We defer the proof for now and instead use  Lemma \ref{lem:technical} to prove Theorem \ref{ball Pleijel constant}. 
\begin{proof}[{Proof of Theorem \ref{ball Pleijel constant}}] By  Assumption \ref{assumption:bourget}, all Robin eigenfunctions have the form
\[f_m(\theta)J(x_{\nu,k}r)\]
for some pair $(m,k)$, with $f_m(\theta)$ being a spherical harmonic (an element of $Y_m(S^{d-1})$). The same is true for Dirichlet eigenfunctions with $x_{\nu,k}$ replaced by $j_{\nu,k}$.
Consider a sequence of Robin eigenfunctions $u_i$, corresponding to eigenvalues $\mu_{\ell_i}$, for which
\[\lim_{i\to\infty}\frac{N(u_{i})}{\ell_i} = \alpha. \]By the previous paragraph, there is a sequence $(m_i,k_i)$ and a sequence of elements $f_{m_i}\in Y_{m_i}(S^{d-1})$ for which
\[u_i(\theta,r) =f_{m_i}(\theta)J(x_{\nu_i,k_i}r). \]
With this notation, $\ell_i=\iota(m_i,k_i)$.
Now consider the corresponding sequence of Dirichlet eigenfunctions
\[w_{i}(\theta,r) := f_{m_i}(\theta)J(j_{\nu_i,k_i}r).\]
We know that for each $i$, $N(u_i)=N(w_i)$, as both are equal to the number of nodal domains of $f_{m_i}(\theta)$ times $k_i$. And the index of $u_i$ as a Dirichlet eigenfunction is precisely $\iota_D(m_i,k_i)$. By Lemma \ref{lem:technical},
\[\lim_{i\to\infty}\frac{N(w_{i})}{\iota_D(m_i,k_i)}=\lim_{j\to\infty}\frac{N(u_i)}{\iota(m_i,k_i)}\frac{\iota(m_i,k_i)}{\iota_D(m_i,k_i)} = \alpha\cdot 1 = \alpha.\]
Thus any limit point of Robin nodal quotients is also a limit point of Dirichlet nodal quotients. Repeating the argument word for word shows that the reverse is true as well. Therefore the sets of limit points, and thus their suprema, are the same, completing the proof.
\end{proof}
\medskip

\subsection{Proof of Lemma \ref{lem:technical}}
We begin with a few preliminary observations. By Weyl's law for the Dirichlet eigenvalues, there is a nonzero constant $c_d$ such that as $j_{m,k}\to\infty$,
\begin{equation}\label{eq:idasymp}i_D(m,k)\sim c_d(j_{\nu,k})^d.\end{equation}
Additionally, the multiplicity coefficients $\kappa_{m,d}$ are polynomials of degree $d-2$ in $m$. Thus there is a nonzero constant $\tilde c_d$ such that as $N\to\infty$,
\begin{equation}\label{eq:sumofkappas}
\sum_{m=1}^{N}\kappa_{m,d} = \tilde c_dN^{d-1}+O(N^{d-2}).
\end{equation}
By combining \eqref{eq:interlacinglarge} and \eqref{eq:interlacingsmall}, we know that for each $k>1$,
\begin{equation}\label{eq:interlacing}
j_{\nu,k-1} < x_{\nu,k} < j_{\nu,k+1}.
\end{equation}
When $k=1$ we have the bound \cite[p.~335]{graham1973}
\begin{equation}\label{eq:kequalsone}
\max\{0,\sqrt{m^2-\sigma^2}\} \leq x_{\nu,1} < j_{\nu,2}.
\end{equation}
Finally we need some results on spacing of Bessel function zeroes. These well-known bounds follow immediately from, e.g., \cite[Theorem 2]{horsley17}. For all $\nu\geq 1/2$ and all $k$,
\begin{equation}\label{eq:besselspacing}j_{\nu,k+1}-j_{\nu,k}\leq j_{\nu,2}-j_{\nu,1},\end{equation}
and when $\nu<1/2$,
\begin{equation}\label{eq:spacingtinynu}
j_{\nu,k+1}-j_{\nu,k}\leq\pi.
\end{equation}

Now we prove Lemma \ref{lem:technical}. 

\begin{proof}[Proof of Lemma \ref{lem:technical}] Consider $\iota(m,k)$. We will find upper and lower bounds for it. Begin with an upper bound. Letting $\eta=\ell+d/2-1$ and using \eqref{eq:interlacing} we have
\[\iota(m,k)=N_{\sigma}+\sum_{(\ell,j):x_{\eta,j}\leq x_{\nu,k}}\kappa_{\ell,d}\quad \leq\quad  N_{\sigma}+\sum_{(\ell,j):x_{\eta,j}\leq j_{\nu,k+1}}\kappa_{\ell,d}.\]
Considering the $j=1$ and $j>1$ terms separately,
\[\iota(m,k)\leq N_{\sigma}+\sum_{\substack{\scriptstyle{\ell}\\\scriptstyle{x_{\eta,1}\leq j_{\nu,k+1}}}}\kappa_{\ell,d} + \sum_{\substack{\scriptstyle{(\ell,j)}\\\scriptstyle{ j\geq 2,\,\,x_{\eta,j}\leq j_{\nu,k+1}}}}\kappa_{\ell,d}.\]
Using \eqref{eq:kequalsone} for the middle term and \eqref{eq:interlacing} for the last term,
$$\iota(m,k)\leq N_{\sigma}+\sum_{\substack{\scriptstyle{\ell}\\\scriptstyle{\sqrt{\ell^2-\sigma^2}\leq j_{\nu,k+1}}}}\kappa_{\ell,d} + \sum_{\substack{\scriptstyle{(\ell,j)}\\\scriptstyle{ j\geq 2,\,\, j_{\eta,j-1}\leq j_{\nu,k+1}}}}\kappa_{\ell,d}.$$
Rewriting the middle term and relabeling $j-1$ as $j$ in the last term gives the upper bound
\begin{equation}\label{eq:upperbound}\iota(m,k)\leq N_{\sigma}+\sum_{\ell \leq \sqrt{j_{\nu,k+1}^2+\sigma^2}}\kappa_{\ell,d} + \iota_D(m,k+1).\end{equation}
Now divide by $\iota_D(m,k)$ and consider the limit as $x_{m,k}$ goes to infinity (which is equivalent to $j_{m,k}$ going to infinity and happens for any infinite subsequence of distinct $(m,k)$). The first term on the right of \eqref{eq:upperbound} goes to zero when divided by $\iota_D(m,k)$. The second term on the right of \eqref{eq:upperbound} is, by \eqref{eq:sumofkappas},
\[\tilde c_dj_{\nu,k+1}^{d-1} + O(j_{\nu,k+1})^{d-2}.\]
So division gives
\begin{equation}\label{eq:afterdiv}\frac{\iota(m,k)}{\iota_D(m,k)}\leq\frac{j_{\nu,k+1}^d}{j_{\nu,k}^d}  + \frac{\tilde c_dj_{\nu,k+1}^{d-1}}{c_dj_{\nu,k}^d} + o(1).\end{equation}
We claim that $j_{\nu,k+1}/j_{\nu,k}\to 1$, which would show that the right-hand side of \eqref{eq:afterdiv} is $1+o(1)$. Indeed, it is equivalent to show that the following goes to zero:
\[\frac{j_{\nu,k+1}-j_{\nu,k}}{j_{\nu,k}}.\]
For any subsequence with $\nu<1/2$, this goes to zero because the numerator is bounded by $\pi$ by \eqref{eq:spacingtinynu} and the denominator goes to infinity. For all other such subsequences, \eqref{eq:besselspacing} gives
\[\frac{j_{\nu,k+1}-j_{\nu,k}}{j_{\nu,k}}\leq \frac{j_{\nu,2}-j_{\nu,1}}{j_{\nu,1}+(k-1)\pi}.\]
But asymptotics of Bessel zeroes together with the fact that $j_{\nu,1}>\nu$ \cite[(10.21.3)]{DLMF} give
\[\frac{j_{\nu,k+1}-j_{\nu,k}}{j_{\nu,k}}\leq \frac{C\nu^{1/3}}{\nu+(k-1)\pi},\]
which goes to zero as $x_{\nu,k}$ goes to infinity, proving the claim. Applying the claim to \eqref{eq:afterdiv} yields
\begin{equation}\label{eq:upperboundfinal}\frac{\iota(m,k)}{\iota_D(m,k)}\leq 1 + o(1).\end{equation}

Now we want a lower bound for $\iota(m,k)$. Using \eqref{eq:interlacing} and the fact that $N_{\sigma}$ is non-negative shows that 
\[\iota(m,k)\geq \sum_{(\ell,j):j_{\mu,j+1}\leq x_{\nu,k}}\kappa_{\ell,d}.\]
We must now consider the cases $k > 1$ and $k=1$ separately. Assume that $k>1$. By \eqref{eq:interlacing},
\[\iota(m,k)\geq \sum_{(\ell,j):j_{\mu,j+1}\leq j_{\nu,k-1}}\kappa_{\ell,d}\geq i_D(m,k-1) - \sum_{\ell:j_{\mu,1}\leq j_{\nu,k-1}}\kappa_{\ell,d}.\]
Divide both sides by $\iota_D(m,k)$. The first term on the right goes to 1, by the same argument as in the upper bound case. The second term on the right goes to zero, as the numerator is $O(j_{\nu,k-1}^{d-1})$ while the denominator is $c_dj_{\nu,k}^d$. So we get
\begin{equation}
\label{eq:lowerboundfinal}\frac{\iota(m,k)}{\iota_D(m,k)}\geq 1 - o(1).\end{equation}
On the other hand, if $k=1$, then using the alternative lower bound \eqref{eq:kequalsone} gives
\[\iota(m,1)\geq \sum_{(\ell,j):j_{\mu,j+1}\leq \sqrt{m^2-\sigma^2}}\kappa_{\ell,d}\geq \sum_{(\ell,j):j_{\mu,j}\leq \sqrt{m^2-\sigma^2}}\kappa_{\ell,d}
- \sum_{\ell:j_{\mu,1}\leq \sqrt{m^2-\sigma^2}}\kappa_{\ell,d}.\]
As $m\to\infty$, the first term on the right is $c_dm^{d}+o(m^{d})$ and the second term is $O(m^{d-1})$. So dividing by $\iota_D(m,1)$ again gives $1 + o(1)$, showing \eqref{eq:lowerboundfinal} in this case as well. Lemma \ref{lem:technical} now follows from \eqref{eq:upperboundfinal} and \eqref{eq:lowerboundfinal}. \end{proof}

We now show that Assumption \ref{assumption:bourget} is generic.

\begin{proposition} \label{prop:genericity}
Assumption \ref{assumption:bourget} fails for at most a countable set of $\sigma\in\mathbb R$.
\end{proposition}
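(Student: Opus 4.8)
The plan is to show that for each fixed pair of distinct indices $m\neq m'$ in $\mathbb N_0$, the set of $\sigma\in\mathbb R$ for which some positive root associated to $m$ coincides with some positive root associated to $m'$ is countable; since a countable union of countable sets is countable, this suffices. First I would recast Assumption \ref{assumption:bourget} in terms of a single analytic object. For fixed $\nu=m+d/2-1$, define
\[
F_m(z,\sigma) := zJ_{\nu}'(z) - \Big(\tfrac d2+1-\sigma\Big)J_{\nu}(z),
\]
so that the positive roots in the assumption are exactly the positive zeros of $F_m(\cdot,\sigma)$. The coincidence we wish to exclude is: there exist $m\neq m'$ and $z>0$ with $F_m(z,\sigma)=0$ and $F_{m'}(z,\sigma)=0$ simultaneously.

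Next I would eliminate $\sigma$ from these two equations. At a common root $z$, both equations give the same value of $\sigma$, namely
\[
\sigma = \frac d2+1-\frac{zJ_{\nu}'(z)}{J_{\nu}(z)} = \frac d2+1-\frac{zJ_{\nu'}'(z)}{J_{\nu'}(z)},
\]
so the crucial necessary condition on $z$ is the $\sigma$-independent equation
\[
\frac{J_{\nu}'(z)}{J_{\nu}(z)} = \frac{J_{\nu'}'(z)}{J_{\nu'}(z)},
\]
equivalently
\[
G_{m,m'}(z):=J_{\nu}'(z)J_{\nu'}(z)-J_{\nu}(z)J_{\nu'}'(z)=0.
\]
The key observation is that $G_{m,m'}$ is a real-analytic function of $z$ on $(0,\infty)$ that is not identically zero. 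Nonvanishing can be seen from the behavior as $z\to 0^+$: using the leading-order expansions $J_{\nu}(z)\sim (z/2)^{\nu}/\Gamma(\nu+1)$, the Wronskian-type combination $G_{m,m'}(z)$ has a nonzero leading term proportional to $(\nu-\nu')z^{\nu+\nu'-1}=(m-m')z^{\nu+\nu'-1}$, which does not vanish identically since $m\neq m'$. Hence by the identity theorem for real-analytic functions, the zero set of $G_{m,m'}$ in any compact subinterval of $(0,\infty)$ is finite, so the full positive zero set $Z_{m,m'}=\{z>0: G_{m,m'}(z)=0\}$ is at most countable (in fact discrete). For each $z\in Z_{m,m'}$ the common equations force a \emph{single} value $\sigma(z)=\frac d2+1-zJ_{\nu}'(z)/J_{\nu}(z)$ (excluding the measure-zero set of $z$ where $J_\nu(z)=0$, which in any case produces no finite $\sigma$). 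Therefore the set of bad $\sigma$ arising from the pair $(m,m')$ is contained in the image $\{\sigma(z):z\in Z_{m,m'}\}$, which is countable.

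Finally I would assemble the pieces: the bad set $\mathcal B=\bigcup_{m\neq m'}\{\sigma(z):z\in Z_{m,m'}\}$ is a countable union over pairs $(m,m')\in\mathbb N_0\times\mathbb N_0$ of countable sets, hence countable, proving the proposition. The main obstacle I anticipate is the verification that $G_{m,m'}$ is genuinely not identically zero; this must be handled with care because $J_\nu$ and $J_{\nu'}$ satisfy closely related Bessel equations and one might worry their logarithmic derivatives coincide on an interval. The cleanest route is the small-$z$ asymptotic computation sketched above, but one could alternatively differentiate the defining Bessel ODEs to show that a common root together with coincident logarithmic derivatives would force agreement of all higher derivatives and hence $\nu=\nu'$, contradicting $m\neq m'$. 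A minor technical point to dispatch along the way is excluding points where $J_\nu(z)=0$ or $J_{\nu'}(z)=0$, so that the logarithmic derivatives are well defined; since Bessel zeros are isolated this removes only countably many $z$ and does not affect the conclusion.
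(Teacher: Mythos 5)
Your proof is correct, and it takes a genuinely different route from the paper's. The paper fixes a quadruple $(m_1,k_1,m_2,k_2)$ and argues by contradiction: if the corresponding roots coincided for uncountably many $\sigma$, the zeros of $f_{m_1}-f_{m_2}$ (the difference of the two logarithmic-derivative functions) would accumulate inside a bounded interval between consecutive Bessel zeros; since these functions blow up at the endpoints (because Bessel zeros of distinct orders never coincide, by the Bourget--Siegel result), the accumulation point is interior, and analyticity forces $f_{m_1}\equiv f_{m_2}$ there, again contradicting non-coincidence of Bessel zeros. You instead eliminate $\sigma$ at the outset, observing that any coincidence point $z$ must be a zero of the $\sigma$-independent cross-Wronskian $G_{m,m'}=J_{\nu}'J_{\nu'}-J_{\nu}J_{\nu'}'$, and that each such $z$ determines a unique bad $\sigma$; countability then follows from the discreteness of the zero set of the non-trivial analytic function $G_{m,m'}$, with non-triviality verified by the small-$z$ expansion giving a leading term proportional to $(\nu-\nu')z^{\nu+\nu'-1}$. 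Your approach buys a direct parametrization of the bad set (no contradiction argument, no need to localize to intervals between Bessel zeros) and is more self-contained, since it replaces the appeal to the non-coincidence of Bessel zeros of different orders by an elementary asymptotic computation; the paper's version, by contrast, leans on classical facts from Watson and keeps the argument organized around the monotone branches of $f_m$. The only points to tidy are cosmetic: the excluded set $\{z: J_{\nu}(z)=0 \text{ or } J_{\nu'}(z)=0\}$ is discrete (not merely measure zero), and, as you note, such points are never roots of the equation in Assumption \ref{assumption:bourget} for any finite $\sigma$ because zeros of $J_{\nu}$ are simple, so nothing is lost.
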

\begin{proof}

Consider the functions
\[f_m(z):=\frac{zJ_{m+d/2-1}'(z)}{J_{m+d/2-1}(z)}-\frac d2-1.\]
These functions are real analytic and monotone decreasing on each interval \[(j_{m+d/2-1,k},j_{m+d/2-1,k+1}),\] and approach $\pm\infty$ at the endpoints (zeroes of the corresponding Bessel functions). Suppose for contradiction that there exist $m_1$, $k_1$, $m_2$, and $k_2$ for which $x_{m_1,k_1}=x_{m_2,k_2}$ for an uncountable set of $\sigma$. Then there are an uncountable set of values $z$, in a given open, bounded interval (the intersection of the intervals for each individual function), for which $f_{m_1}(z)-f_{m_2}(z)=0$. Therefore this set of zeroes must have an accumulation point. The values $m_j+d/2-1$ are either both integers or both half-integers, and zeroes of Bessel functions of those orders do not coincide \cite[pp. 484-485]{watson}, so at both ends of the interval, $f_{m_1}(z)-f_{m_2}(z)$ approaches $\pm\infty$. Thus the accumulation point must be in the interior. But $f_{m_1}(z)-f_{m_2}(z)$ is analytic. So we must have $f_{m_1}(z)-f_{m_2}(z)=0$ on the entire interval. But this is a contradiction, as again, Bessel zeroes do not coincide. Thus $x_{m_1,k_1}=x_{m_2,k_2}$ may only happen for countably many $\sigma$. Taking the union over all possible choices of $m_1$, $k_1$, $m_2$, and $k_2$ yields the proposition.
\end{proof}

\begin{remark} Observe that even without making Assumption \ref{assumption:bourget}, there exists a \emph{basis} of Robin eigenfunctions on a Euclidean ball which are separated solutions. For that basis, the proofs in this section show that Pleijel's theorem holds with the same constant as for the Dirichlet ball.
\end{remark}

\section*{Acknowledgment}
The authors are grateful to Katie Gittins for helpful comments. A. H. is partially supported by EPSRC grant EP/T030577/1. D. S. was partially supported by a URC grant from DePaul University.

\bibliographystyle{alpha}
\bibliography{references}

\newcommand{\etalchar}[1]{$^{#1}$}
\begin{thebibliography}{ODL{\etalchar{+}}20}

\bibitem[BDPV15]{Brascoetal}
Lorenzo Brasco, Guido De~Philippis, and Bozhidar Velichkov.
\newblock Faber-{K}rahn inequalities in sharp quantitative form.
\newblock {\em Duke Math. J.}, 164(9):1777--1831, 2015.

\bibitem[BM82]{BM82}
P.~B\'{e}rard and D.~Meyer.
\newblock In\'{e}galit\'{e}s isop\'{e}rim\'{e}triques et applications.
\newblock {\em Ann. Sci. \'{E}cole Norm. Sup. (4)}, 15(3):513--541, 1982.

\bibitem[Bob18]{Bobkov}
Vladimir Bobkov.
\newblock On exact {P}leijel's constant for some domains.
\newblock {\em Doc. Math.}, 23:799--813, 2018.

\bibitem[Bou15]{Bourgain}
J.~Bourgain.
\newblock On {P}leijel's nodal domain theorem.
\newblock {\em Int. Math. Res. Not. IMRN}, (6):1601--1612, 2015.

\bibitem[BS80]{BS80}
M.~\v{S}. Birman and M.~Z. Solomjak.
\newblock {\em Quantitative analysis in {S}obolev imbedding theorems and
  applications to spectral theory}, volume 114 of {\em American Mathematical
  Society Translations, Series 2}.
\newblock American Mathematical Society, Providence, R.I., 1980.
\newblock Translated from the Russian by F. A. Cezus.

\bibitem[Dan09]{daners09}
Daniel Daners.
\newblock Inverse positivity for general {R}obin problems on {L}ipschitz
  domains.
\newblock {\em Arch. Math. (Basel)}, 92(1):57--69, 2009.

\bibitem[Dec22]{Decio}
Stefano Decio.
\newblock Nodal {S}ets of {S}teklov {E}igenfunctions near the {B}oundary:
  {I}nner {R}adius {E}stimates.
\newblock {\em Int. Math. Res. Not. IMRN}, (21):16709--16729, 2022.

\bibitem[Don14]{don}
H.~Donnelly.
\newblock Counting nodal domains in {R}iemannian manifolds.
\newblock {\em Ann. Global Anal. Geom.}, 46(1):57--61, 2014.

\bibitem[Fed59]{Fed59}
Herbert Federer.
\newblock Curvature measures.
\newblock {\em Trans. Amer. Math. Soc.}, 93:418--491, 1959.

\bibitem[FG12]{FG12}
Rupert~L. Frank and Leander Geisinger.
\newblock Semi-classical analysis of the {L}aplace operator with {R}obin
  boundary conditions.
\newblock {\em Bull. Math. Sci.}, 2(2):281--319, 2012.

\bibitem[GH19]{GH}
Katie Gittins and Bernard Helffer.
\newblock Courant-sharp {R}obin eigenvalues for the square and other planar
  domains.
\newblock {\em Port. Math.}, 76(1):57--100, 2019.

\bibitem[GH21]{GH21}
K.~Gittins and B.~Helffer.
\newblock Courant-sharp {R}obin eigenvalues for the square: the case of
  negative {R}obin parameter.
\newblock {\em Asymptot. Anal.}, 124(1-2):69--107, 2021.

\bibitem[GL20]{GL20}
Katie Gittins and Corentin L\'{e}na.
\newblock Upper bounds for {C}ourant-sharp {N}eumann and {R}obin eigenvalues.
\newblock {\em Bull. Soc. Math. France}, 148(1):99--132, 2020.

\bibitem[Gra73]{graham1973}
Keith~D Graham.
\newblock Separation of eigenvalues of the wave equation for the unit ball in
  rn.
\newblock {\em Studies in Applied Mathematics}, 52(4):329--343, 1973.

\bibitem[Gri85]{Grisvard}
P.~Grisvard.
\newblock {\em Elliptic problems in nonsmooth domains}, volume~24 of {\em
  Monographs and Studies in Mathematics}.
\newblock Pitman (Advanced Publishing Program), Boston, MA, 1985.

\bibitem[Hor17]{horsley17}
David~E Horsley.
\newblock Bessel phase functions: calculation and application.
\newblock {\em Numerische Mathematik}, 136(3):679--702, 2017.

\bibitem[KP81]{KP81}
Steven~G. Krantz and Harold~R. Parks.
\newblock Distance to {$C^{k}$} hypersurfaces.
\newblock {\em J. Differential Equations}, 40(1):116--120, 1981.

\bibitem[L{\'{e}}n19]{lena}
Corentin L{\'{e}}na.
\newblock Pleijel's nodal domain theorem for {N}eumann and {R}obin
  eigenfunctions.
\newblock {\em Ann. Inst. Fourier (Grenoble)}, 69(1):283--301, 2019.

\bibitem[ODL{\etalchar{+}}20]{DLMF}
F.~W.~J. Olver, A.~B.~Olde Daalhuis, D.~W. Lozier, B.~I. Schneider, R.~F.
  Boisvert, C.~W. Clark, B.~V.~Saunders B.~R. Mille~and, H.~S. Cohl, and eds.
  M.~A.~McClain.
\newblock Nist digital library of mathematical functions.
\newblock http://dlmf.nist.gov/, Release 1.0.26 of 2020-03-15, 2020.

\bibitem[Pee57]{peetre}
Jaak Peetre.
\newblock A generalization of {C}ourant's nodal domain theorem.
\newblock {\em Math. Scand.}, 5:15--20, 1957.

\bibitem[Ple56]{Pl56}
{\AA}.~Pleijel.
\newblock Remarks on {C}ourant's nodal line theorem.
\newblock {\em Comm. Pure Appl. Math.}, 9:543--550, 1956.

\bibitem[Pol09]{Pol09}
Iosif Polterovich.
\newblock Pleijel's nodal domain theorem for free membranes.
\newblock {\em Proc. Amer. Math. Soc.}, 137(3):1021--1024, 2009.

\bibitem[Rog58]{Rog58}
C.~A. Rogers.
\newblock The packing of equal spheres.
\newblock {\em Proc. London Math. Soc. (3)}, 8:609--620, 1958.

\bibitem[Ste14]{Stein}
S.~Steinerberger.
\newblock A geometric uncertainty principle with an application to {P}leijel's
  estimate.
\newblock {\em Ann. Henri Poincar\'{e}}, 15(12):2299--2319, 2014.

\bibitem[TZ09]{TZ09}
John~A. Toth and Steve Zelditch.
\newblock Counting nodal lines which touch the boundary of an analytic domain.
\newblock {\em J. Differential Geom.}, 81(3):649--686, 2009.

\bibitem[Wat44]{watson}
George~Neville Watson.
\newblock {\em A treatise on the theory of Bessel functions, 2nd edition}.
\newblock Cambridge University Press, 1944.

\end{thebibliography}
\end{document}